\documentclass[11pt]{article}

\usepackage{iftex}
\ifpdftex
    \usepackage{CJKutf8}
\else
    \usepackage[scheme=plain]{ctex}
    
    \newenvironment{CJK*}[2]{}{}
\fi

\usepackage{amsmath,amsfonts,amsthm,amssymb}
\usepackage{mathtools,bm,mathrsfs}
\usepackage{graphicx}
\usepackage[colorlinks=true, allcolors=blue]{hyperref}
\usepackage{thmtools}
\usepackage[capitalise]{cleveref}
\usepackage{standalone}
\usepackage{tikz}
\usetikzlibrary{shapes.geometric}
\usetikzlibrary{shapes.misc}
\usepackage{indentfirst}
\usepackage{authblk}
\usepackage{diagbox}
\usepackage{changes}
\usepackage{enumitem}
\usepackage{bbm}
\usepackage{orcidlink}

\newtheorem{theorem}{Theorem}[section]
\newtheorem{lemma}[theorem]{Lemma}
\newtheorem{corollary}[theorem]{Corollary}

\newtheorem{case}{Case}

\newtheorem{definition}[theorem]{Definition}

\newtheorem*{claim*}{Claim}

\newcommand{\Ex}{\mathbb{E}}

\newcommand{\cR}{\mathcal{R}}

\newcommand{\smallo}{o}

\newcommand{\LLY}{\mathrm{LLY}}

\newcommand\ceil[1]{\left\lceil #1\right\rceil}

\DeclarePairedDelimiter{\card}{\lvert}{\rvert}
\DeclarePairedDelimiter{\set}{\lbrace}{\rbrace}

\DeclarePairedDelimiter{\paren}{\lparen}{\rparen}

\newcommand\floor[1]{\left\lfloor #1\right\rfloor}
\title{Bipartite graphs, random graphs, and Lin--Lu--Yau curvature}

\author{Huiqiu Lin\footnote{email: huiqiulin@126.com}~\orcidlink{0000-0002-6072-4647}}

\author{Zhe You\footnote{email: y30231280@mail.ecust.edu.cn}~\orcidlink{0009-0006-9769-5372}}

\author{Da Zhao\footnote{email: zhaoda@ecust.edu.cn}~\orcidlink{0000-0002-9582-0778}}

\affil{School of Mathematics, East China University of Science and Technology}
\date{}

\begin{document}
\maketitle

\begin{abstract}
 Let $G = (X, Y; E)$ be a bipartite graph with parts $X$ and $Y$ where $|X|=m$ and $|Y|=n$. We show that every bipartite graph with more than $mn - D(m,n)$ edges has positive Lin--Lu--Yau curvature, where $D(m,n)=m-2+\ceil{\frac {n}{2}} \text{ if $n\geq 2m$}, \mbox{and} \
         n-1 \text{ if $m\leq n< 2m$}.$
We also show that every bipartite graph of order $m+n$ with $m \geq n$ and minimum degree at least $\min\{n, \floor{\frac{m+n}{3}}+1\}$ has positive Lin--Lu--Yau curvature.
Both bounds are sharp. 
Meanwhile probabilistically we can relax the edge density conditions in above results. 
It is shown that relatively dense random bipartite graph is positively curved. 
All of our proofs are based on a new formula for Lin--Lu--Yau curvature of bipartite graphs.
\end{abstract}

\textbf{Keywords:}  Lin--Lu--Yau curvature, bipartite graphs, random graphs

\textbf{Mathematics Subject Classification:} 05C35, 05C80, 53A70

\section{Introduction}

We consider simple finite graphs in this paper. 
Let $G = (V, E)$ be a simple graph. 
We use the notation $G = (X, Y; E)$ for a bipartite graph with parts $X$ and $Y$.

Lin, Lu and Yau~\cite{LLY11} introduced the notion of Lin--Lu--Yau curvature $\kappa_{\LLY}(x, y)$, abbreviated as LLY curvature, based on Ollivier curvature~\cite{ollivier2009RicciCurvatureMarkov}. 
It is defined on any two distinct vertices on a graph. 
We call $\kappa_{\LLY}(x,y)$ the Lin--Lu--Yau curvature of an edge whenever $x$ and $y$ are adjacent. 
The Lin--Lu--Yau curvature is defined via optimal transport between neighborhood of two vertices. 
The precise definition is formulated via the Wasserstein distance between probability measures, and we postpone it to the next section. 
A graph $G$ is positively curved if the LLY curvature of each of its edges is positive. 

The {\it connectivity} $k(G)$ of a non-complete graph $G$ is defined as the minimum number of vertices whose removal disconnects the graph~\cite{bondy2008GraphTheory,diestel2025GraphTheory}. 
For consistency, the connectivity of a complete graph $K_n$ with $n$ vertices is defined to be $n-1$.
The {\it edge-connectivity} $k'(G)$ of a graph $G$ with at least two vertices is defined as the minimum number of edges whose removal disconnects the graph.
For consistency, the edge-connectivity of a graph with only a single vertex is defined to be $0$.

For a graph $G$ with $n$ vertices, Hehl~\cite{Hehl25} proved that if $\delta(G)\geq \frac{2n}{3}-1$, then $\kappa_{\LLY}(G)\geq 0$. 
Moreover, the bound $\frac{2n}{3}-1$ is optimal.
Similarly, Chen, Liu and You~\cite{CLY} proved that the Lin-Lu-Yau curvature of $G$ is positive whenever the connectivity $k(G)$ of a graph $G$ with $n$ vertices is at least $\frac{n-1}{2}$. 
the bound $\frac{n-1}{2}$ is also optimal.
In this paper, we consider the similar questions in the bipartite setting.

\begin{theorem}\label{thm:main}
Let $G=(X,Y;E)$ be a bipartite graph with $|X|=m\ge n=|Y|$.  If
\[
   \delta(G)\ge \cR(m,n)\coloneqq\min\left\{n,\left\lfloor\frac{m+n}{3}\right\rfloor+1\right\},
\]
then $G$ has positive Lin--Lu--Yau curvature.
\end{theorem}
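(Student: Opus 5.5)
We fix an edge $xy$ with $x\in X$, $y\in Y$ and show $\kappa_{\LLY}(x,y)>0$ directly, using the standard Münch--Wojciechowski description of LLY curvature as a transport problem. Let $d_x=\deg x$ and $d_y=\deg y$, and suppose $d_x\le d_y$. Then
\[
\kappa_{\LLY}(x,y)=\frac{1}{d_x}\Bigl(d_x+1-W\Bigr),
\]
where $W$ is the minimal cost of transporting the uniform measure on $N(x)\setminus\{y\}$ onto a suitable portion of $N(y)$.

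In the bipartite setting the costs are simple. We have $N(x)\subseteq Y$ and $N(y)\subseteq X$. For $u\in N(x)$ and $v\in N(y)$, the distance $d(u,v)$ is $1$ if $uv\in E$ and $3$ otherwise, since $u$ and $v$ lie in different parts. Also $y$ itself, and $x$ itself, may serve as endpoints at distance $1$ from everything in the opposite neighborhood. So the first step is to derive the bipartite formula
\[
\kappa_{\LLY}(x,y)=\frac{2}{d_y}\Bigl(\mu-\tfrac{d_y-d_x}{2}\cdot 0\Bigr)-\cdots,
\]
written more cleanly as follows. Positivity is equivalent to the bipartite graph $H$ between $N(x)\setminus\{y\}$ and $N(y)\setminus\{x\}$ having a matching saturating $N(x)\setminus\{y\}$. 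This matching may be relaxed by one unit, because the leftover mass can be routed through $x$ and $y$ at cost $1$ each. Concretely, I expect $\kappa>0$ iff the transport cost is below $d_x+1$. This follows once all but at most one unit of mass moves at distance $1$, which in turn follows from a near-perfect matching in $H$.

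The main step is to verify Hall's condition in $H$ using the degree bound $\delta\ge\cR(m,n)$. Take $S\subseteq N(x)\setminus\{y\}\subseteq Y$. Each $u\in S$ has at least $\delta-1$ neighbors in $X\setminus\{x\}$. A vertex of $X$ not in $N(y)$ is excluded, and there are at most $m-d_y\le m-\delta$ of these. Hence each $u$ has at least $2\delta-m-1$ neighbors in $N(y)\setminus\{x\}$.

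We then argue in two cases.

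\textbf{Case $\delta=n$.} Then every vertex of $X$ is adjacent to all of $Y$, so $G$ is complete bipartite and its curvature is positive by direct computation.

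\textbf{Case $\delta=\lfloor (m+n)/3\rfloor+1$.} We run Hall's condition by splitting on $|S|$. For small $S$, the bound $|N_H(S)|\ge 2\delta-m-1$ suffices when $|S|\le 2\delta-m-1$. For large $S$, we count from the other side. A vertex $v\in N(y)\setminus\{x\}$ misses $S$ only if all its at least $\delta-1$ other neighbors lie in $Y\setminus S$, which has size $n-|S|$. So once $|S|>n-\delta+1$, every $v$ lies in $N_H(S)$. The threshold $\delta>(m+n)/3$ is what makes these two ranges overlap up to a deficiency of one, namely $2\delta-m-1\ge n-\delta$. I expect this to be exactly where the sharp constant appears.

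The obstacle I expect is the bookkeeping of the deficiency-one slack: showing that a Hall deficiency of at most $1$, combined with the free cost-$1$ moves through $x$ and $y$ (a $3$-cycle-free replacement for the triangles in Chen--Liu--You), still forces $W<d_x+1$. The unequal degrees $d_x\ne d_y$ also need care. My first attempt would write the optimal transport plan explicitly: matched pairs at cost $1$, $y\mapsto x$ or to a neighbor at cost $1$, and at most one unmatched unit at cost $3$. I would then compare this total against $d_x+1$, and use the strict inequality coming from the $+1$ in $\cR(m,n)$.
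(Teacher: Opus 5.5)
\textbf{There is a genuine gap: you reduce positivity to a (near-)matching in $H$, and that reduction is false in both ways you rely on.}

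\emph{Deficiency one cannot be absorbed.} In a bipartite graph, a vertex $u\in N(x)\setminus\{y\}$ is at distance $2$ from $y$, and $v\in N(y)\setminus\{x\}$ is at distance $2$ from $x$. So rerouting an unmatched unit along $u\to x\to y\to v$ costs $3=\rho(u,v)$, with no saving. Already for $d_x=d_y$, a plan with $d_x-2$ units at cost $1$ and one unit at cost $3$ has $W=d_x+1$ in your normalisation, i.e.\ $\kappa_{\LLY}(x,y)=0$. This is exactly the paper's sharpness graph $G_0$, where $d_x=d_y=r$ and $N_H(I)=Q$ with $|Q|=|I|-1$. The ``$+1$'' in $\cR(m,n)$ is therefore not slack to spend on a deficient matching.

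\emph{Unequal degrees need a weighted condition.} When $d_x<d_y$, even a matching saturating $N(x)\setminus\{y\}$ does not suffice. Each vertex of $N(x)$ carries mass $1/d_x$, but each vertex of $N(y)$ absorbs only $1/d_y$, so each $u$ must spread over about $d_y/d_x$ neighbours. The correct criterion is the weighted Hall condition of Lemma~\ref{lem:hall-formula}, applied with the names $x,y$ swapped: $\kappa_{\LLY}(x,y)>0$ iff $d_x(|N_H(S)|+1)>d_y|S|$ for every $S\subseteq N(x)\setminus\{y\}$. The paper proves it by blowing up each vertex of one neighbourhood into $d_y$ copies and each vertex of the other into $d_x$ copies, then applying the deficiency version of Hall. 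The graph $G_1$ of Section~\ref{sec:extremal} shows ordinary Hall is not enough: it has a matching saturating $N_y=\{w\}$, yet $\kappa_{\LLY}(x,y)\le 0$.

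\textbf{What you actually need to verify} is $|N_H(S)|\ge \frac{d_y}{d_x}|S|$ for all $S$. This gives $\kappa_{\LLY}(x,y)=2/d_y>0$ by Corollary~\ref{bipartite-curvature-lowerbound}. Your small-set bound $|N_H(S)|\ge 2\delta-m-1$ is too weak for this because it discards $d_y$. For example, take $m=n$, $\delta=\lfloor 2n/3\rfloor+1$, $d_x=\delta$, $d_y=n$. At $|S|=n-\delta\approx n/3$ your bound gives about $n/3$, while you need about $n/2$.

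The repair, which is the paper's argument, has three parts:
\begin{enumerate}
\item Keep the sharper bound $|N_H(S)|\ge d_y+\delta-m-1$.
\item Use the large-set threshold $|S|>n-\delta$, not $n-\delta+1$, since $y\notin S$ is already one of $v$'s neighbours. Your off-by-one here is what made a deficiency appear; with the correct threshold, ordinary Hall already has zero deficiency. Beyond the threshold, $N_H(S)=N(y)\setminus\{x\}$, and $d_y-1\ge \frac{d_y}{d_x}(d_x-1)$.
\item For $|S|\le n-\delta$, check
\[
d_x(d_y+\delta-m-1)-d_y(n-\delta)=d_y(d_x+\delta-n)+d_x(\delta-m-1)\ge d_x(3\delta-m-n-1)\ge 0,
\]
using $d_y\ge d_x\ge\delta$ and $d_x+\delta-n\ge 2\delta-n>0$.
\end{enumerate}
Your simultaneous normalisation $x\in X$ and $d_x\le d_y$ is not without loss of generality. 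However, this computation only uses $3\delta\ge m+n+1$, so it is symmetric in the two parts.
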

Moreover, this threshold is sharp.
For $n\geq 2$, there exists a graph $G_0$ with minimum degree $\cR(m,n)-1$ that has an edge with non-positive curvature, as shown in Section~\ref{proof of main}.
The graph $G_0$ we construct satisfy $k(G_0)=k'(G_0)=\delta(G_0)$.   
Recall a classical estimate of connectivity of a graph $G$ due to Whitney~\cite{Whi32} reads as
\begin{equation}\label{whitney}
    k(G)\leq k'(G)\leq \delta(G).
\end{equation}
Thus, we have the following corollary.

\begin{corollary}\label{connectivity}
    Let $G=(X,Y;E)$ be a bipartite graph with $|X|=m\ge n=|Y|$.  
    If $ k(G)\ge \cR(m,n)$ (or $k'(G)\ge \cR(m,n)$), 
then $G$ has positive Lin--Lu--Yau curvature, and the threshold is sharp.
\end{corollary}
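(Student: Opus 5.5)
The corollary follows at once from \cref{thm:main} together with Whitney's inequality \eqref{whitney}; sharpness comes from the extremal graph $G_0$ built in Section~\ref{proof of main}.

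For the positive-curvature part, assume first that $k(G)\ge \cR(m,n)$. By \eqref{whitney},
\[
\delta(G)\ge k'(G)\ge k(G)\ge \cR(m,n).
\]
So $G$ meets the minimum-degree hypothesis of \cref{thm:main} and is positively curved. If instead only $k'(G)\ge \cR(m,n)$ is assumed, the second inequality of \eqref{whitney} gives $\delta(G)\ge k'(G)\ge\cR(m,n)$, and \cref{thm:main} applies again. Both degenerate conventions (the connectivity of a complete graph, and the edge-connectivity of a single vertex) still satisfy \eqref{whitney}, so no separate case is needed.

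For sharpness we reuse the graph $G_0$ from Section~\ref{proof of main}, assuming $n\ge 2$. It has minimum degree $\cR(m,n)-1$, contains an edge of non-positive Lin--Lu--Yau curvature, and satisfies $k(G_0)=k'(G_0)=\delta(G_0)=\cR(m,n)-1$. Hence neither $\cR(m,n)$ in the connectivity hypothesis nor $\cR(m,n)$ in the edge-connectivity hypothesis can be lowered to $\cR(m,n)-1$.

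The only real work is the construction of $G_0$: one must check both that it has a non-positively curved edge and that its connectivity equals its minimum degree. Since the paper asserts both properties alongside \cref{thm:main}, I expect the deduction of the corollary itself to be routine.
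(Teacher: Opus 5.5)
Your first half matches the paper exactly: Whitney's inequality \eqref{whitney} turns either connectivity hypothesis into $\delta(G)\ge\cR(m,n)$, and \cref{thm:main} finishes. The gap is in the sharpness half. \cref{sharpness} only gives you two facts about $G_0$: that $\delta(G_0)=r=\cR(m,n)-1$, and that $G_0$ has a non-positively curved edge. The equality $k(G_0)=k'(G_0)=\delta(G_0)$ is merely announced in the introduction. Its proof is precisely the content of the paper's proof of this corollary, so by taking it as given you have assumed the only non-routine step. Without it you do not even know that $G_0$ is connected. Then $G_0$ would not witness that $k(G)\ge\cR(m,n)-1$ (or $k'(G)\ge\cR(m,n)-1$) is insufficient.

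By \eqref{whitney} it suffices to prove $k(G_0)\ge r$, i.e.\ that deleting any set $U$ with $|U|<r$ leaves $G_0$ connected. Then $r\le k(G_0)\le k'(G_0)\le\delta(G_0)=r$ gives sharpness for both hypotheses.
\begin{itemize}
\item For $n=2$: $r=1$ and $G_0$ is a tree, so $k(G_0)=1$.
\item For $n\ge3$, suppose some vertex of $P$ survives. It is adjacent to all of $Y$, so it collects $Y\setminus U$ into one component. Every surviving vertex of $X$ has degree at least $r>|U|$, so it keeps a neighbour in $Y\setminus U$. Hence $G_0-U$ is connected.
\item Symmetrically, $G_0-U$ is connected if some vertex of $Q$ survives, since $Q$ is complete to $X$.
\item Otherwise $P\cup Q\subseteq U$. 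Since $|P|+|Q|=r-2$, at most one further vertex is deleted. Now $G_0-(P\cup Q)$ is the blow-up of the $6$-cycle with classes $\{x\},\{y\},I,S,R,J$, all non-empty because $i\ge1$, $j\ge1$, $m-r\ge1$ and $n-r\ge1$. Removing one vertex from such a blow-up leaves it connected.
\end{itemize}
Adding this verification closes your proof.
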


In another paper by Chen, Liu and You~\cite{chen2026ExtremalTheoremPositive}, they showed that every graph of order $n \geq 8$ with more than $\frac{n^2 - 3n}{2} - \ceil{\frac{n}{2}} + 2$ edges has positive Ollivier/Lin--Lu--Yau curvature and the threshold is optimal. 
A closely related criterion for positive Lin--Lu--Yau curvature, formulated in terms of forbidden subgraphs in the complement, was obtained in~\cite{CLY26}.
Our next result of this paper provides an extremal result on bipartite graphs.

\begin{theorem}\label{main}
 For integers $m,n$, set 
 \begin{align*}
     D(m,n)&\coloneqq 
     \begin{cases}
         m-2+\ceil{\frac {n}{2}}, \text{ if $n\geq 2m$};\\
         n-1, \text{ if $m\leq n< 2m$}.
     \end{cases}
\end{align*}
Then for any $2\leq m \leq n$, any bipartite graph $G=(X,Y;E)$ with $|X|=m$ and $|Y|=n$, and more than $mn-D(m,n)$ edges has positive Lin--Lu--Yau curvature.
Moreover, there exists a bipartite graph with $mn-D(m,n)$ edges contains an edge of non-positive Lin--Lu--Yau curvature.
\end{theorem}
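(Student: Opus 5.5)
The proof will go through a combinatorial description of Lin--Lu--Yau curvature on bipartite edges. For an edge $xy$ with $x\in X$, $y\in Y$, I would use the standard Münch--Wojciechowski description: $\kappa_{\LLY}(x,y)$ is the optimal value of a transport problem between $\mu_x$ and $\mu_y$ (uniform on $N[x]$ and $N[y]$ respectively, idleness $1/(d+1)$ in the limit). For bipartite graphs there are no triangles, so $N(x)\subseteq Y$ and $N(y)\subseteq X$. Every unit of mass moved from $N(x)\setminus\{y\}$ to $N(y)\setminus\{x\}$ travels distance $1$ if the two endpoints are adjacent and distance $3$ otherwise. In bipartite graphs all such distances are odd, so they are at most $3$ as long as the path through $x,y$ exists. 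Hence the curvature is positive precisely when there is a sufficiently large (fractional) matching between $N(x)\setminus\{y\}$ and $N(y)\setminus\{x\}$ using edges of $G$. I would first state the resulting criterion as a lemma, presumably the "new formula" promised in the abstract. In the form I expect, $\kappa_{\LLY}(x,y)>0$ holds iff the bipartite graph $H_{xy}$ between $A=N(x)\setminus\{y\}$ and $B=N(y)\setminus\{x\}$ admits a fractional matching saturating enough mass. Concretely, with $d_x\le d_y$, I would use a Hall-type deficiency condition: for every $S\subseteq A$, $|N_H(S)|$ is compared against $|S|$ with weights $1/d_x$ versus $1/d_y$.

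\textbf{Upper bound.} Suppose $G$ has more than $mn-D$ edges, so the complement $\bar G$ (inside $K_{m,n}$) has at most $D-1$ missing edges. Fix an edge $xy$, let $a$ be the number of missing edges at $x$ and $b$ the number at $y$, and let $c$ be the number of missing edges inside $A\times B$. Then $a+b+c\le D-1$. Using Hall's condition (König's theorem) in $H_{xy}$, a violating set requires either many missing edges between $S$ and $B$, or small $B$. The small-$B$ case is controlled by $n-1-a$ versus $m-1-b$. I would split by the sizes $d_x=n-a$ and $d_y=m-b$. A Hall violator $S$ with $|S|=s$ forces at least $s\cdot(\text{deficiency})$ missing edges, roughly $s(|B|-s')$ edges into an isolated-ish subset. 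Minimizing the total $a+b+c$ over violating configurations should give exactly $D(m,n)$. When $n\ge 2m$ the $\lceil n/2\rceil$ term arises because it is cheapest to make $d_x$ large and $d_y$ small, i.e.\ $b=m-2$. Then $y$ has one other neighbour $x'$, and half of $N(x)\setminus\{y\}$ must be cut from $x'$ to make transport cost at least $1$ in total. When $n<2m$ it is cheaper to isolate the structure via $n-1$ missing edges at $x$ or a similar configuration.

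\textbf{Sharpness.} For $n\ge 2m$, I would take $y$ adjacent only to $x$ and $x'$ (that is $m-2$ missing edges), and delete $\lceil n/2\rceil$ edges from $x'$ to $N(x)$. Then I would verify by exhibiting a $1$-Lipschitz function (Kantorovich duality) giving $\kappa\le 0$. For $m\le n<2m$, I would take a vertex of degree $1$, or remove $n-1$ edges to create a pendant-like edge, again certified by a Lipschitz potential.

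\textbf{Main obstacle.} The hard part is the optimization step in the upper bound: showing that every configuration with $a+b+c\le D-1$ admits the required fractional matching, uniformly in the degree imbalance between $d_x$ and $d_y$. I would first try a direct König argument. A minimum vertex cover of $H_{xy}$ of size less than the required mass means $A\setminus C$ and $B\setminus C$ are completely disconnected in $G$, which costs $|A\setminus C|\cdot|B\setminus C|$ missing edges. I would then minimize $a+b+|A\setminus C||B\setminus C|$ under the constraint, which becomes a short two-variable integer optimization giving the case split at $n=2m$.
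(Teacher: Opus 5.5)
Your skeleton matches the paper's: a Hall-type formula for the curvature of a bipartite edge, then counting the non-edges forced by a violating set together with the $n-d_x$ and $m-d_y$ non-edges at $x$ and $y$, then optimizing. Your $n\ge 2m$ example is exactly the paper's $G_1$. The gap is that both load-bearing steps are left open.

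First, you never fix the criterion, and its exact form is what makes the threshold exact. The paper proves (\cref{lem:hall-formula}), with $N_x=N(x)\setminus\{y\}$, $N_y=N(y)\setminus\{x\}$ and $H=G[N_x,N_y]$,
\[
\kappa_{\LLY}(x,y)=2\min_{U\subseteq N_y}\Bigl(\frac{|N_H(U)|+1}{d_x}-\frac{|U|}{d_y}\Bigr).
\]
The upper bound comes from a $\{-1,0,1,2\}$-valued $1$-Lipschitz test function. The lower bound comes from a transport plan built from a maximum matching (with deficiency) in a blow-up of $H$, where each vertex of $N_x$ is copied $d_y$ times and each vertex of $N_y$ is copied $d_x$ times. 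The $+1$ cannot be dropped. The plain weighted Hall condition $|N_H(S)|/d_x\ge|S|/d_y$ is only sufficient (\cref{bipartite-curvature-lowerbound}) and is too weak for this theorem. For example, take $K_{m,n}$ with $m<n<2m$ and delete $n-2$ edges at a vertex $w\in X$, keeping $wy$ and one other edge. For $x\in X\setminus\{w\}$ the set $S=\{w\}$ violates the plain condition, yet $e(\overline{G})=n-2=D(m,n)-1$ and $\kappa_{\LLY}(x,y)=2(\frac2n-\frac1m)>0$.

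Second, the optimization you call the main obstacle is the proof itself, and it has to be carried out; it is short once the formula is in hand. Orient $x\in X$, $y\in Y$, so that the violating set lies in the smaller part. If $\kappa_{\LLY}(x,y)\le 0$, then $d_y\ge 2$ and some nonempty $S\subseteq N_y$ satisfies $(|N_H(S)|+1)d_y\le |S|d_x$. Set $s=|S|\le d_y-1$ and $r=|N_x\setminus N_H(S)|$; the inequality says $r\ge d_x(d_y-s)/d_y$. The $rs$ non-edges between $S$ and $N_x\setminus N_H(S)$ are disjoint from those at $x$ and $y$. Using $s(d_y-s)\ge d_y-1$ and $d_x\le n$,
\[
e(\overline{G})\ \ge\ rs+(n-d_x)+(m-d_y)\ \ge\ n+m-d_y-\frac{d_x}{d_y}\ \ge\ n+m-d_y-\frac{n}{d_y}.
\]
This is concave in $d_y\in[2,m]$, hence at least $\min\{m-2+\frac n2,\ n-\frac nm\}$. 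The difference of the two terms is $(m-2)(\frac{n}{2m}-1)$, which produces the split at $n=2m$, and integrality gives $e(\overline{G})\ge D(m,n)$.

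This computation also shows your heuristic for $n<2m$ is wrong. Putting $n-1$ non-edges at $x$ makes $d_x=1$, and a pendant edge has $\kappa_{\LLY}=2/d_y>0$ by \cref{no C3 C4 C5}. So the ``pendant-like edge'' in your sharpness example is positively curved. The extremal case is the endpoint $d_x=n$, $d_y=m$, $s=1$, $r=n-1$: a third vertex $w\in N(y)\setminus\{x\}$ has degree $1$, and the bad edge is $xy$. Taking $U=\{w\}$ gives $\kappa_{\LLY}(x,y)\le 2(\frac1n-\frac1m)\le 0$; this is the paper's $G_2$.
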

For connectivity, diameter, eigenvalues, and other combinatorial properties of graphs
with positive Lin--Lu--Yau curvature, see
\cite{CLY,CKKLMP20,Hehl-regular,MW19} and the references therein.
One may also consider the similar problems for other notions of discrete curvature~\cite{NR17}.

The above results put strong restriction on number of edges. 
In fact, probabilistically the edge density can be relaxed. 
We show that relatively dense random bipartite graphs are positively curved. 
A random bipartite graph $B(m, n, p)$ is obtained with two parts each of size $m$ and $n$ respectively, and each edge between two parts occurs with probability $p$. 

\begin{theorem}\label{thm:random_bipartite_positive}
    Let $G$ be a random bipartite graph $B(m,n,p)$ with parts $X$ and $Y$.  
    Denote by $N = m + n$ and $s = \min(m,n)$. 
    Suppose $p^2 s \gg \log N$. 
    Then
    \begin{align*}
         \lim_{N \to \infty} \Pr\paren*{G \text{ is positively curved}} = 1.
    \end{align*}
\end{theorem}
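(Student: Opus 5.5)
For an edge $xy$ with $x\in X$, $y\in Y$, I will show that a transport plan of cost strictly less than $1$ exists between the uniform measures on $N(x)\cup\{x\}$ and $N(y)\cup\{y\}$ (the Lin--Lu--Yau limit). This gives $\kappa_{\LLY}(x,y)>0$. The argument is deterministic given some typical properties of $G$, and a union bound shows these properties hold for all edges at once with high probability.

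\textbf{Step 1: typical properties.} Set $d_x=\deg x$, and note $N(x)\subseteq Y$ and $N(y)\subseteq X$. I will use three facts.
\begin{itemize}
\item[(a)] Every vertex in $X$ has degree $np(1\pm\varepsilon)$ and every vertex in $Y$ has degree $mp(1\pm\varepsilon)$.
\item[(b)] Any two vertices in the same part have $(1\pm\varepsilon)p^2\cdot(\text{size of the other part})$ common neighbours. This is $\ge (1-\varepsilon)p^2 s$.
\item[(c)] For an edge $xy$, any $x'\in N(y)$ and $y'\in N(x)$ are adjacent with probability $p$, independently over pairs. Hence every $x'\in N(y)\setminus\{x\}$ has $(1\pm\varepsilon)p\,d_x$ neighbours in $N(x)$, and symmetrically.
\end{itemize}
Each count is binomial with mean at least $p^2 s\gg\log N$. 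Chernoff plus a union bound over at most $N^3$ triples makes (a)--(c) hold simultaneously with probability $1-o(1)$, for any $\varepsilon=\varepsilon(N)\to 0$ chosen slowly. For (c) I condition on the edge $xy$ and on $N(x)$, $N(y)$; the remaining edges between $N(y)\setminus\{x\}$ and $N(x)\setminus\{y\}$ are still independent.

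\textbf{Step 2: the transport plan.} A Lin--Lu--Yau type bipartite formula (the "new formula" the paper announces) reduces the curvature to a matching problem. The curvature is $\ge \frac{2}{d}(\,\cdot\,)$-type expressions in the size of a matching between $N(x)\setminus\{y\}$ and $N(y)\setminus\{x\}$ using edges of $G$. Matched pairs are at distance $1$ and unmatched pairs at distance $3$.

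Without the formula, I would argue directly. Take the bipartite graph $H$ between $A=N(y)\setminus\{x\}$ and $B=N(x)\setminus\{y\}$ induced by $G$. By (c), $H$ has all degrees $(1\pm\varepsilon)p|B|$ on one side and $(1\pm\varepsilon)p|A|$ on the other, so Hall's condition holds. Concretely, a fractional perfect transport exists by max-flow: every vertex of $A$ can send its mass along edges of $H$, since an almost-biregular bipartite graph has a fractional matching saturating the smaller side up to a $1-O(\varepsilon)$ fraction.

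Then the plan moves mass as follows:
\begin{itemize}
\item the mass $1/(d_x+1)$ at $x$'s side vertex $y$ stays at $y$;
\item the mass at $x$ stays at $x$;
\item mass from $B$ goes to $A$ along edges of $H$ at cost $1$ each;
\item the leftover imbalance from $d_x\ne d_y$ and from the unmatched $O(\varepsilon)$ fraction costs at most $3$ per unit.
\end{itemize}

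\textbf{Step 3: the cost bound.} The total cost is at most $(1-\tfrac{2}{d+1})\cdot 1+O(\varepsilon)+O(|d_x-d_y|/d)$. This falls below $1$ once the $\tfrac{2}{d+1}$ saving beats the error terms. Here lies the \textbf{main obstacle}. When $m\ne n$, the degrees $d_x\approx np$ and $d_y\approx mp$ differ substantially, so the naive bound fails. The plan must exploit that the surplus mass can still be moved within distance $1$ or $2$, not $3$.

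To handle this, I would work with the exact optimal transport via the bipartite formula. Mass from $N(x)\subseteq Y$ to $N(y)\subseteq X$ has odd distance, so it is always $1$ or $3$. The key quantity is whether the larger neighbourhood can be fractionally matched into the smaller one with every vertex used at most proportionally. By (c), each vertex of $A$ has about $p d_x$ neighbours in $B$, which gives exactly such a fractional biregular transport with all distances $1$. Its error is then only $O(\varepsilon)$ relative to the gain, which is order $1/d$ per unit from $x,y$ staying put.

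Comparing $\varepsilon$ with $1/d$ forces $\varepsilon\ll 1/(p\,s)$. That is too strong for Chernoff, so I expect instead to use a Hall-type deficiency bound. The deficiency of the fractional matching should be $0$ exactly, not $O(\varepsilon)$. This needs only a Hall condition in $H$, namely that every set $S\subseteq A$ has $|N_H(S)|\ge |S|\,|B|/|A|$. That condition follows from (c) with slack, because $p|B|\gg 1$.
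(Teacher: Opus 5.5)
Your final reduction is the paper's route: positivity follows from an exact weighted Hall condition in $H=G[N(x)\setminus\{y\},\,N(y)\setminus\{x\}]$, with the surplus of the larger neighbourhood sent to $y$ (or $x$) at distance $2$. This is \cref{lem:hall-formula} and \cref{bipartite-curvature-lowerbound}, which give $\kappa_{\LLY}(x,y)=2/d_x$ when $d_x\ge d_y$ and $|N_H(S)|/d_x\ge |S|/d_y$ for all $S\subseteq N_y$; your condition $|N_H(S)|\ge |S|\,|B|/|A|$ implies this. Your Step 2--3 accounting, with uniform measures on closed neighbourhoods and cost $3$ for the imbalance, should be dropped. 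With a common idleness $p$ and the surplus routed at distance $2$, one gets $W(\mu_x^p,\mu_y^p)\le 1-2(1-p)/d_x$.

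The genuine gap is your last sentence: the exact Hall condition does \emph{not} follow from (c). Double counting the edges between $S$ and $N_H(S)$ under (c) only yields $|N_H(S)|\ge\frac{1-\varepsilon}{1+\varepsilon}|S|\,|B|/|A|$. That is precisely the multiplicative $O(\varepsilon)$ deficiency you had just identified as fatal. The additive slack $p|B|\gg 1$ only settles the two extreme ranges: $|S|\lesssim p|A|$, where a single vertex of $S$ has enough neighbours, and $|A\setminus S|<\min_{v\in B}\deg_H v$, where $N_H(S)=B$. In between, near-biregularity is genuinely insufficient. For example (say $p\le 1/2$), take a disjoint union of two biregular pieces between $A_1,B_1$ and $A_2,B_2$, with $|A_1|=|A_2|$, $|B_1|=(1-\varepsilon/2)|B|/2$, and all $B$-degrees equal to $p|A|$. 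This graph satisfies (c), yet $|N_H(A_1)|=|B_1|<|A_1|\,|B|/|A|$.

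What is missing is a probabilistic expansion estimate for $H$ itself, which is \cref{lem:weighted_hall_failure}. Conditioned on $N(x),N(y)$ (as you correctly note), $H\sim B(a,b,p)$ with $a=\min(d_x,d_y)-1\le b$. Suppose the weighted condition fails at some $S$ with $|S|=r$. Then there is a set $T$ on the other side with $|T|=t_r=\lfloor\frac{b+1}{a+1}(a+1-r)\rfloor$ and no edges between $S$ and $T$. So the failure probability is at most $\sum_{r}\binom{a}{r}\binom{b}{t_r}(1-p)^{rt_r}$.

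Set $q_r=\min(r,a+1-r)$. The entropy term is only $O\bigl(\frac{b}{a}q_r\log\frac{ea}{q_r}\bigr)$, because each of $S$ and $T$ is either small or co-small, while $rt_r\gtrsim b\,q_r$. Hence every term is at most $\exp\bigl(\frac{b}{a}q_r(C_0\log(eN)-c_0pa)\bigr)\le N^{-K-2}$ once $pa\ge C_K\log N$. On the degree event (your (a)) one has $a\gtrsim ps$, so $pa\gtrsim p^2s\gg\log N$; this is where the hypothesis is really needed. A per-edge failure probability of $N^{-5}$ then survives the union bound over at most $N^2$ edges.

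Replacing your appeal to (c) by this union bound over empty pairs $(S,T)$ repairs the argument. With that change, (b) and (c) are not needed at all; only the degree bounds in (a) are used.
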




The paper is organized as follows. 
In~\cref{sec:preliminary}, we prepare notations and tools for the proofs. 
In~\cref{proof of main}, we prove the main theorem, ~\cref{thm:main}. 
In~\cref{sec:extremal}, we consider the extremal bipartite graphs.
In~\cref{sec:random_bipartite}, we prove~\cref{thm:random_bipartite_positive}.

\section{Preliminaries}\label{sec:preliminary}

Throughout the paper, we use the following notation. 
Let $G=(V,E)$ be a simple finite graph.
For any $x\in V$ and $A\subseteq V$, let $N_A(x)$ be the set of neighbors of $x$ in $A$ and let $d_x\coloneqq|N_V(x)|$ be its degree. 
For simplicity, we abbreviate $N_V(x)$ as $N(x)$.
A vertex $y\ne x$ which is not adjacent to $x$ is called a non-neighbor of $x$.
For any $S\subseteq V$, define $N_A(S)\coloneqq\cup_{v\in S}N_A(v)$.
For any two vertices $x$ and $y$ in $V(G)$, we denote the distance between them by $\rho(x,y)$. 
For two fixed vertices $x$ and $y$  in $V(G)$, we set $A_{xy}\coloneqq N(x)\cap N(y)$, $N^{(xy)}_x\coloneqq N(x)\backslash (A_{xy}\cup \{ y \})$, and $N^{(xy)}_y\coloneqq N(y)\backslash (A_{xy}\cup \{ x \})$.
For simplicity, we write $N_x$ for $N^{(xy)}_x$ and write $N_y$ for $N^{(xy)}_y$ hereafter.

\subsection{Lin--Lu--Yau curvature}
Before introducing the Lin--Lu--Yau curvature, we first recall the definition of the Wasserstein distance.
\begin{definition}[$L^1$-Wasserstein distance]
     Let $G=(V,E)$ be a locally finite graph, $\mu_1$ and $\mu_2$ be two probability measures on $G$. 
     The {\it $L^1$-Wasserstein distance} $W(\mu_1, \mu_2)$ between $\mu_1$ and $\mu_2$ is defined as
     \begin{align}\label{defi}
         W(\mu_1,\mu_2)=\inf_{\pi}\sum_{u\in V}\sum_{v\in V}\rho(u,v)\pi(u,v),
     \end{align}
     where the infimum is taken over all the mappings $\pi: V\times V\to [0,1]$ satisfying
     $$\mu_1(u)=\sum\limits_{v\in V}\pi(u,v) \text{ for any}\ u\in V$$
     and
     $$\mu_2(v)=\sum\limits_{u\in V}\pi(u,v) \text{ for any}\ v\in V.$$ 
     Such a mapping is called a {\it transport plan} from $\mu_1$ to $\mu_2$. 
     A transport plan that attains the infimum in \eqref{defi} is called {\it optimal}.
\end{definition}
 Here, for a given idleness parameter $p\in [0,1]$, we consider the particular measure $\mu_x^p$ around a vertex $x\in V$ defined as follows:
    \[\mu_x^p(y)=\left\{
                    \begin{array}{ll}
                      p, & \hbox{if $y=x$;} \\
                      \frac{1-p}{d_x}, & \hbox{if $xy\in E$;} \\
                      0, & \hbox{otherwise.}
                    \end{array}
                  \right.
     \]
     
   Based on the probability measure above, two kinds of Ricci curvature on graphs are defined as follows.
\begin{definition}[$p$-Ollivier curvature and Lin--Lu--Yau curvature] 
Let $G=(V,E)$ be a locally finite graph. 
For any two distinct vertices $x,y$ in $G$, the {\it $p$-Ollivier curvature} $\kappa_p(x,y)$, $p\in [0,1]$, is defined as
     \[\kappa_p(x,y)=1-\frac{W(\mu_x^p,\mu_y^p)}{\rho(x,y)}.\]
The {\it Lin--Lu--Yau curvature} $\kappa_{\LLY}(x,y)$ is defined as
     \[\kappa_{\LLY}(x,y)=\lim_{p\to 1}\frac{\kappa_p(x,y)}{1-p}.\]
\end{definition}
It is worth noting that the ratio $\kappa_p(x,y)/(1-p)$ is constant when $p$ is large enough. 
In fact, it was proved in \cite{BCLMP18} that
\begin{equation}\label{eq:bourne}
  \kappa_{\LLY}(x,y)=\frac{\kappa_p(x,y)}{1-p} \,\,\text{for any $p \in \left[\frac{1}{\max\{d_x,d_y\}+1},1\right)$}.
\end{equation}
In particular we have 
\[
  \kappa_{\LLY}(x,y)>0
  \quad
  \Longleftrightarrow
  \quad
  \kappa_{1/2}(x,y)>0.
\]
Therefore, all of our results are also true for $p$-Ollivier with $\frac{1}{2}\leq p< 1$.

For any locally finite graph $G$, the  normalized graph Laplacian $\Delta$ is defined as $$\Delta f(x)\coloneqq\frac{1}{d_x} \sum_{y: xy\in E(G)}(f(y)-f(x)), \text{ for any $f: V(G)\to \mathbb{R}$ and any $x\in V(G)$}.$$
 A function $f$ on $V(G)$ is called $1$-Lipschitz if  $|f(x)-f(y)|\leq \rho(x,y)$ holds for any $x,y\in V(G)$, and we denote the set of $1$-Lipschitz functions by $Lip(1)$.
 There is another limit-free formulation of Lin--Lu--Yau curvature, which was given by M\"{u}nch and Wojciechowski~\cite{MW19}. 
 \begin{theorem}[Curvature via Laplacian {\cite[Corollary 2.2]{MW19}}]\label{Curvature via the Laplacian}
     Let $G$ be a locally finite graph and let $xy$ be an edge. Then
     $$\kappa_{\LLY}(x, y)=\inf _{\substack{f:N(x)\cup N(y)\to \mathbb{Z}\\f \in Lip(1) \\ f(y)-f(x)=1}} \left(\Delta f(x)-\Delta f(y)\right).$$
 \end{theorem}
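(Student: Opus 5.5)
The plan is to derive the formula from Kantorovich--Rubinstein duality applied to the lazy measures $\mu_x^p,\mu_y^p$, and then to use the piecewise-linear structure of the resulting optimization in $p$ to pass to the limit $p\to 1$.

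\textbf{Step 1 (duality).} For finitely supported probability measures on a connected graph, Kantorovich--Rubinstein duality gives
\[
W(\mu_x^p,\mu_y^p)=\sup_{f\in Lip(1)}\Bigl(\sum_v f(v)\mu_y^p(v)-\sum_v f(v)\mu_x^p(v)\Bigr).
\]
A direct computation from the definitions of $\mu_x^p$ and $\Delta$ gives $\sum_v f(v)\mu_x^p(v)=f(x)+(1-p)\Delta f(x)$. Since $\rho(x,y)=1$, this yields
\[
\kappa_p(x,y)=\inf_{f\in Lip(1)}\Bigl(1-\bigl(f(y)-f(x)\bigr)+(1-p)\bigl(\Delta f(x)-\Delta f(y)\bigr)\Bigr).
\]
The expression depends only on the values of $f$ on $B:=N(x)\cup N(y)$, which contains $x$ and $y$. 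Any $1$-Lipschitz function on $B$, with respect to the graph metric $\rho$ restricted to $B$, extends to a $1$-Lipschitz function on $V$ by the McShane extension $\tilde f(v)=\min_{u\in B}(f(u)+\rho(u,v))$. So we may take the infimum over $1$-Lipschitz $f:B\to\mathbb R$. Adding a constant to $f$ changes nothing, so we normalize $f(x)=0$.

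\textbf{Step 2 (polytope and integrality).} After the normalization $f(x)=0$, the admissible set $P=\{f:B\to\mathbb R:\ f(x)=0,\ |f(u)-f(v)|\le\rho(u,v)\}$ is a bounded polytope, since every value is bounded by $\rho(x,\cdot)\le 2$. The objective is affine in $f$, so the infimum is attained at a vertex of $P$. The constraints are difference constraints $f(u)-f(v)\le \rho(u,v)$ with integer right-hand sides. Their constraint matrix is the transpose of a network (incidence) matrix, hence totally unimodular, so every vertex of $P$ is integer-valued. Therefore $\kappa_p(x,y)=\min_{f\in V(P)}\Phi_p(f)$ over the finite set $V(P)$ of integer vertices. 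Each $\Phi_p(f)=1-(f(y)-f(x))+(1-p)(\Delta f(x)-\Delta f(y))$ is affine in $1-p$.

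\textbf{Step 3 (limit).} For every $f\in P$ we have $f(y)-f(x)\le 1$, and $f=\rho(x,\cdot)$ attains equality. Vertices with $f(y)-f(x)\le 0$ have $\Phi_p(f)\ge 1-O(1-p)$. Vertices with $f(y)-f(x)=1$ have $\Phi_p(f)=(1-p)(\Delta f(x)-\Delta f(y))=O(1-p)$. Hence for $p$ sufficiently close to $1$ (finitely many vertices, bounded Laplacians) the minimum is attained on integer vertices with $f(y)-f(x)=1$. This gives
\[
\frac{\kappa_p(x,y)}{1-p}=\min_{\substack{f\in V(P)\\ f(y)-f(x)=1}}\bigl(\Delta f(x)-\Delta f(y)\bigr).
\]
This minimum equals the infimum over all integer-valued $1$-Lipschitz $f$ on $B$ with $f(y)-f(x)=1$. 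Indeed, such functions form a face of $P$ whose vertices are integral, and every integer point of that face is admissible. Letting $p\to1$ (or invoking \eqref{eq:bourne}) gives the claim.

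The main obstacle I expect is Step 2, the integrality of the vertices of the Lipschitz polytope. This is what justifies restricting to $\mathbb Z$-valued $f$. I would handle it by the total unimodularity of difference constraints, or equivalently by a rounding argument: for a non-integral optimal $f$, shift all vertices sharing a fractional part simultaneously up or down. This preserves the Lipschitz inequalities because $\rho$ is integer-valued, and by linearity one of the two directions does not increase the objective.
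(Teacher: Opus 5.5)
The paper does not prove this statement. It is quoted from \cite{MW19}, where it is established for general graph Laplacians, and it is used as a black box, as is \eqref{eq:bourne} from \cite{BCLMP18}. So there is no internal argument to match, and your proposal has to stand on its own; I find it correct and complete.

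What is distinctive about your route is that it reduces everything to finite-dimensional linear programming. After normalizing $f(x)=0$, the feasible set is a bounded polytope cut out by difference constraints with integer right-hand sides. The finiteness of its vertex set, which is integral by total unimodularity, does three jobs at once:
\begin{enumerate}
\item It shows that $\kappa_p(x,y)/(1-p)$ is eventually constant in $p$, a qualitative form of \eqref{eq:bourne}. Your comparison via $|\Delta f|\le 1$ gives the crude range $p\ge 3/4$, not the sharp $p\ge 1/(\max\{d_x,d_y\}+1)$.
\item It shows that the infimum is attained.
\item It gives the reduction to $\mathbb{Z}$-valued test functions on $N(x)\cup N(y)$.
\end{enumerate}

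Two points you use implicitly deserve a line each in a write-up. First, the face $\{f\in P: f(y)-f(x)=1\}$ is nonempty because it contains the restriction of $\rho(x,\cdot)$ to $B$. Hence it contains vertices of $P$, and these are exactly its own vertices. Second, if $G$ is disconnected, perform the McShane extension only on the component containing $x$ and $y$, since $\rho=\infty$ across components.

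Note also that the paper only ever uses the easy half of the formula: $\kappa_{\LLY}(x,y)\le \Delta f(x)-\Delta f(y)$ for one explicit admissible $f$, in \cref{lem:hall-formula} and \cref{sharpness}. That inequality already follows from weak duality in your Step~1, by testing that single $f$ and letting $p\to 1$. The integrality machinery of Steps~2--3 is needed only for the reverse inequality.
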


We present some useful Lemmas in our proof as follows, which can be obtained based on~\cref{Curvature via the Laplacian}.

\begin{lemma}[{\cite[Lemma 1]{LLY14}}]\label{no C3 C4 C5}
    Let $G$ be a locally finite simple graph.
    Suppose that an edge $xy$ in the graph $G$ is not in any $C_3$, $C_4$, or $C_5$. 
    Then
$$\kappa_{\LLY}(x,y)=\frac{2}{d_x}+\frac{2}{d_y}-2.$$
\end{lemma}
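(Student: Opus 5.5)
We compute $\kappa_{\LLY}(x,y)$ with the Laplacian formulation of \cref{Curvature via the Laplacian}, showing that the infimum is attained by a single explicit $1$-Lipschitz function. Write $d_x$, $d_y$ for the degrees. Since $xy$ lies in no triangle, $N(x)\cap N(y)=\emptyset$; set $N_x=N(x)\setminus\{y\}$ and $N_y=N(y)\setminus\{x\}$. Any function $f$ in the infimum is integer valued, $1$-Lipschitz, and satisfies $f(y)-f(x)=1$; normalize $f(x)=0$ and $f(y)=1$. For $u\in N_x$ we have $f(u)\in\{-1,0,1\}$, and for $v\in N_y$ we have $f(v)\in\{0,1,2\}$. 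Then
\[
\Delta f(x)-\Delta f(y)=\frac{1}{d_x}\Big(1+\sum_{u\in N_x}f(u)\Big)-\frac{1}{d_y}\Big(-1+\sum_{v\in N_y}(f(v)-1)\Big).
\]
This quantity is minimized by making each $f(u)$ as small as possible and each $f(v)$ as large as possible.

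\textbf{Upper bound.} Define $f(u)=-1$ for $u\in N_x$, $f(v)=2$ for $v\in N_y$, $f(x)=0$ and $f(y)=1$. We must check that this is $1$-Lipschitz on $N(x)\cup N(y)$. Pairs involving $x$ or $y$ are fine by construction, and pairs inside $N_x$ or inside $N_y$ differ by $0$. The only issue is a pair $u\in N_x$, $v\in N_y$, where the difference is $3$, so we need $\rho(u,v)\ge 3$. Now $u\ne v$, because otherwise $xy$ would lie in a triangle. If $\rho(u,v)=1$, then $x,u,v,y$ is a $C_4$ through $xy$. If $\rho(u,v)=2$ via a vertex $w$, then $w\notin\{x,y\}$: if $w=y$ then $u\in N(y)$, giving a triangle, and $w=x$ is symmetric. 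So $x,u,w,v,y$ is a $C_5$ through $xy$. Hence $\rho(u,v)\ge 3$, and $f$ is admissible. Evaluating gives
\[
\Delta f(x)-\Delta f(y)=\frac{1-(d_x-1)}{d_x}-\frac{(d_y-1)-1}{d_y}\cdot(-1)\cdot(-1),
\]
which simplifies to $\frac{2-d_x}{d_x}+\frac{2-d_y}{d_y}=\frac{2}{d_x}+\frac{2}{d_y}-2$.

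\textbf{Lower bound.} For every admissible $f$, each term is bounded pointwise: $f(u)\ge -1$ and $f(v)-1\le 1$. Hence $\Delta f(x)\ge (1-(d_x-1))/d_x$ and $\Delta f(y)\le ((d_y-1)-1)/d_y$. So every admissible $f$ gives at least the value above, and the infimum equals $\frac{2}{d_x}+\frac{2}{d_y}-2$.

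The only nontrivial step is the Lipschitz verification, which is exactly where the absence of $C_3$, $C_4$ and $C_5$ is used. The domain restriction to $N(x)\cup N(y)$ in \cref{Curvature via the Laplacian} means distances are still measured in $G$, so the argument above applies directly.
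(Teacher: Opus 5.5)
Your proof is correct and is the derivation the paper points to: the paper imports this lemma from \cite{LLY14} and only remarks that it follows from \cref{Curvature via the Laplacian}, which is what you carry out. You exhibit the minimizer $f=-1$ on $N_x$ and $f=2$ on $N_y$, use $C_3$--$C_5$-freeness to get $\rho(u,v)\ge 3$, and obtain the matching lower bound from the pointwise constraints at $x$ and $y$. One small gloss: the pairs $(u,y)$ and $(v,x)$ differ by $2$, so they are admissible not ``by construction'' but because triangle-freeness gives $\rho(u,y)\ge 2$ and $\rho(v,x)\ge 2$.
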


\begin{lemma}[{\cite[Corollary 2]{li2024ricci}}]\label{LLYupperbound}
    For any edge $xy\in E(G)$, assume $d_x \leq d_y$. 
    We have
    $$\kappa_{\LLY}(x,y)\leq \frac{|N(x)\cap N(y)|+2}{d_y}.$$
\end{lemma}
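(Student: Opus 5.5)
The bound follows from the limit-free formula of \cref{Curvature via the Laplacian}. Since $\kappa_{\LLY}(x,y)$ is an infimum over admissible functions, it suffices to exhibit one integer-valued $1$-Lipschitz function $f$ on $N(x)\cup N(y)$ with $f(y)-f(x)=1$ and
\[
  \Delta f(x)-\Delta f(y)=\frac{|A_{xy}|+2}{d_y}.
\]
The idea is to make $f$ ``climb'' by one step when moving away from $x$ towards $y$ and beyond.

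Recall the partition $N(x)\cup N(y)=\{x,y\}\cup A_{xy}\cup N_x\cup N_y$ from the preliminaries. I would define
\[
  f(x)=0,\qquad f(y)=1,\qquad f\equiv 1 \text{ on } A_{xy}\cup N_x,\qquad f\equiv 2 \text{ on } N_y .
\]

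The one point needing care is that $f$ is $1$-Lipschitz with respect to the graph distance $\rho$. We check this pair by pair.
\begin{itemize}
\item Every vertex of $A_{xy}\cup N_x$ is adjacent to $x$, so its value $1$ differs from $f(x)=0$ by $1\le\rho$.
\item Vertices of $A_{xy}\cup N_x$ and of $N_y$ have values in $\{1,2\}$, which differ from $f(y)=1$ by at most $1$. All such pairs are at distance at least $1$.
\item A vertex $v\in N_y$ is not adjacent to $x$, because $v\notin A_{xy}$, and $v\neq x$. Hence $\rho(x,v)\ge 2=|f(v)-f(x)|$.
\item Any two vertices among $A_{xy}\cup N_x\cup N_y$ have values in $\{1,2\}$, so they differ by at most $1$.
\end{itemize}
Thus $f$ is admissible. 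I expect this Lipschitz check to be the only real obstacle. A more aggressive choice, such as putting $-1$ on $N_x$ and $2$ on $N_y$, would fail when some vertex of $N_x$ is adjacent to some vertex of $N_y$. Keeping all values in $\{0,1,2\}$, with $0$ only at $x$, avoids that problem.

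Next we compute the two Laplacians. Every neighbor of $x$ (namely $y$, $A_{xy}$ and $N_x$) has value $1$, so $\Delta f(x)=1$. For $y$, the neighbor $x$ contributes $-1$, the vertices of $A_{xy}$ contribute $0$, and each vertex of $N_y$ contributes $+1$. Hence
\[
  \Delta f(y)=\frac{|N_y|-1}{d_y}=\frac{d_y-|A_{xy}|-2}{d_y},
\]
using $|N_y|=d_y-1-|A_{xy}|$. Therefore
\[
  \Delta f(x)-\Delta f(y)=1-\frac{d_y-|A_{xy}|-2}{d_y}=\frac{|A_{xy}|+2}{d_y},
\]
and \cref{Curvature via the Laplacian} gives $\kappa_{\LLY}(x,y)\le \frac{|A_{xy}|+2}{d_y}$.

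The hypothesis $d_x\le d_y$ is not actually used by this function. It only serves to select the better of the two symmetric bounds: exchanging the roles of $x$ and $y$ via $1-f$ gives the bound with $d_x$ in the denominator, and dividing by the larger degree $d_y$ gives the smaller of the two.
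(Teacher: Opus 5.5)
Your proof is correct. The test function ($0$ at $x$, $1$ on $\{y\}\cup A_{xy}\cup N_x$, $2$ on $N_y$) is integer-valued and $1$-Lipschitz, and in \cref{Curvature via the Laplacian} it gives exactly $\frac{|A_{xy}|+2}{d_y}$; you are also right that the hypothesis $d_x\le d_y$ only selects the stronger of the two symmetric bounds.

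The paper does not prove this lemma itself: it cites Li et al.\ and remarks that the lemma can be obtained from the M\"unch--Wojciechowski formula. That is precisely the route you take.
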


\subsection{Matching}
A {\it matching} in a graph is a set of pairwise non-adjacent edges. 
A matching $M$ is called {\it a matching of} $U\subseteq V$ if every vertex in $U$ is incident with an edge in $M$.
The following Lemma is a direct corollary of Hall's Marriage Theorem~\cite[Theorem 2.1.2]{diestel2025GraphTheory}).


\begin{lemma}[{\cite[Corollary 2.7]{CLY}}]\label{Hall-c}
    Let $H=(V_1,V_2;E)$ be a bipartite graph.
    Suppose that there is a non-negative integer $c$ such that
    $$|N_{V_2}(A)|\geq |A|-c \,\,\text{for all}\,\,A\subseteq V_1.$$
    Then $H$ contains a matching of size at least $|V_1|-c$.
\end{lemma}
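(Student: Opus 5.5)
The plan is to reduce \cref{Hall-c} to the classical Hall's Marriage Theorem by an auxiliary-vertex (deficiency) construction. I would build an auxiliary bipartite graph $H'=(V_1,V_2\cup Z;E')$, where $Z=\set{z_1,\dots,z_c}$ is a set of $c$ new vertices disjoint from $V_1\cup V_2$. The edge set $E'$ consists of $E$ together with all edges $vz_i$ for $v\in V_1$ and $1\le i\le c$. In other words, every vertex of $V_1$ is joined to every new vertex. If $c=0$ nothing is added, and the statement is Hall's theorem itself.

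The first step is to verify Hall's condition in $H'$. For every nonempty $A\subseteq V_1$, each new vertex is adjacent to every vertex of $A$, so $N_{V_2\cup Z}(A)=N_{V_2}(A)\cup Z$. Since $V_2$ and $Z$ are disjoint, the hypothesis gives
\[
\card{N_{V_2\cup Z}(A)}=\card{N_{V_2}(A)}+c\ \ge\ \card{A}-c+c=\card{A}.
\]
For $A=\emptyset$ the condition is trivial. Hall's Marriage Theorem~\cite[Theorem 2.1.2]{diestel2025GraphTheory} then yields a matching $M'$ of $V_1$ in $H'$, so $\card{M'}=\card{V_1}$.

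The second step is to return to $H$. Let $M=M'\cap E$ be the set of edges of $M'$ that do not meet $Z$. The edges of $M'$ are pairwise disjoint, so at most $\card{Z}=c$ of them have an endpoint in $Z$. Hence $M$ is a matching of $H$ with $\card{M}\ge\card{V_1}-c$, which is the claim.

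There is no serious obstacle in this argument. The only point requiring care is that the new vertices are adjacent to all of $V_1$. This is exactly what makes the neighbourhood of every set $A$ grow by precisely $c$, and so compensates uniformly for the deficiency allowed in the hypothesis.
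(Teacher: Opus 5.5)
Your proof is correct. It uses the standard dummy-vertex reduction to Hall's Marriage Theorem, which matches how the paper treats this lemma: the paper gives no separate argument and simply cites it as a direct corollary of Hall's theorem.
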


\section{Lower bound of minimum degree}\label{proof of main}

In this section, we prove~\cref{thm:main}. 
We first prove a curvature formula for  bipartite graphs as follows.
\begin{lemma}\label{lem:hall-formula}
Let $G$ be a bipartite graph.
For every edge $xy\in E(G)$,  define $H_{xy}\coloneqq G[N_x,N_y]$.
Then
\begin{align*}
\kappa_{\LLY}(x,y)
 =2\min_{U\subseteq N_y}
 \left(
 \frac{|N_{H_{xy}}(U)|+1}{d_x}-\frac{|U|}{d_y}
 \right).
\end{align*}
\end{lemma}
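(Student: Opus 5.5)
The plan is to compute $\kappa_{\LLY}(x,y)$ directly from \cref{Curvature via the Laplacian}. In that formula we may normalize $f(x)=0$ and $f(y)=1$, and we then optimize over the remaining integer values of $f$ on $N_x\cup N_y$. Since $G$ is bipartite, it has no triangles. Hence $A_{xy}=\emptyset$, $N(x)=N_x\cup\{y\}$ and $N(y)=N_y\cup\{x\}$, so $d_x=|N_x|+1$ and $d_y=|N_y|+1$.

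\textbf{Step 1: the Lipschitz constraints.} First determine which constraints the $1$-Lipschitz condition actually imposes.
\begin{itemize}
\item A vertex $u\in N_x$ satisfies $\rho(u,x)=1$, so $f(u)\in\{-1,0,1\}$.
\item A vertex $v\in N_y$ satisfies $\rho(v,y)=1$, so $f(v)\in\{0,1,2\}$.
\item Two vertices of $N_x$ (or two of $N_y$) are at distance $2$, and the allowed values above differ by at most $2$, so these pairs impose nothing. Likewise $\rho(u,y)=2$ for $u\in N_x$ and $\rho(v,x)=2$ for $v\in N_y$ impose nothing.
\item For $u\in N_x$ and $v\in N_y$, bipartiteness forces $\rho(u,v)$ to be odd, so it is $1$ or $3$. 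Distance $3$ imposes nothing, since $f(v)-f(u)\le 3$.
\end{itemize}
So the only binding constraints are $|f(u)-f(v)|\le 1$ for edges $uv$ of $H_{xy}=G[N_x,N_y]$. Also, $f(u)\le 1\le f(v)+1$ always holds. Therefore only the lower bounds on $f(u)$ matter.

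\textbf{Step 2: reduce to two nested sets.} Fix $f$ on $N_y$ and set
\[
W=\{v\in N_y: f(v)\ge 1\}, \qquad U=\{v\in N_y: f(v)=2\},
\]
so $U\subseteq W$. We want $\Delta f(x)$ as small as possible, so each $f(u)$ should be as small as the constraints allow. The optimal choice is
\[
f(u)=-1+[u\in N_{H_{xy}}(W)]+[u\in N_{H_{xy}}(U)].
\]
Hence $\sum_{u\in N_x}f(u)=-|N_x|+|N_H(W)|+|N_H(U)|$. On the other side, $\sum_{v\in N_y}(f(v)-1)=|U|-|N_y\setminus W|$.

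Substitute into
\[
\Delta f(x)-\Delta f(y)=\frac{1+\sum_{N_x}f}{d_x}-\frac{-1+\sum_{N_y}(f-1)}{d_y}
\]
and use $d_x=|N_x|+1$, $d_y=|N_y|+1$. The objective then splits as $g(U)+g(W)$, where
\[
g(S)=\frac{|N_{H_{xy}}(S)|+1}{d_x}-\frac{|S|}{d_y}.
\]
The constants work out: $(2-d_x)/d_x$ and $-(-d_y)/d_y$ contribute $2/d_x-1+1=2/d_x$, which is exactly the two $+1/d_x$ terms in $g(U)+g(W)$.

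\textbf{Step 3: conclude.} Every pair $U\subseteq W\subseteq N_y$ is realized by some admissible $f$. So
\[
\kappa_{\LLY}(x,y)=\min_{U\subseteq W\subseteq N_y}\bigl(g(U)+g(W)\bigr).
\]
This is at least $2\min_S g(S)$, since each of $g(U)$ and $g(W)$ is at least $\min_S g(S)$. It is also at most $2\min_S g(S)$, by taking $U=W$ to be a minimizer of $g$. This gives the claimed formula.

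The main care point is Step 1: one must check that no distance other than the $H_{xy}$-edges constrains $f$. This is where bipartiteness, via odd distances between $N_x$ and $N_y$ and the absence of common neighbours, is used essentially.
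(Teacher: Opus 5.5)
Your proof is correct, but it follows a different route from the paper's. The paper uses \cref{Curvature via the Laplacian} only for the upper bound, through the test functions $f_U$; these are exactly your functions with $U=W$.

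For the lower bound the paper argues on the primal side. It blows up $H_{xy}$ into $d_y$ copies of each vertex of $N_x$ and $d_x$ copies of each vertex of $N_y$. It then applies the deficiency form of Hall's theorem (\cref{Hall-c}) with $D=\max_{U\subseteq N_y}(d_x|U|-d_y|N_{H_{xy}}(U)|)$. Finally, it turns the matching, plus an injection of the unmatched copies moved at distance $3$, into an explicit transport plan, treating $d_x\ge d_y$ and $d_x<d_y$ separately.

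You instead compute the infimum in \cref{Curvature via the Laplacian} exactly. Your Step 1 correctly shows that only the edges of $H_{xy}$ give binding Lipschitz constraints. The inner minimization over $f|_{N_x}$ then decouples pointwise, and the objective splits as $g(U)+g(W)$ over nested level sets. This yields both inequalities at once and needs neither Hall's theorem nor a case split. It also explains why the paper's two-valued test functions already attain the minimum.

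The price is in what you assume. The paper only uses the easy directions: a $1$-Lipschitz $f$ bounds $\kappa_{\LLY}$ above by weak duality, and a coupling bounds it below. Your lower bound relies on the full M\"unch--Wojciechowski equality, i.e.\ strong Kantorovich duality plus the existence of an integer-valued optimal potential. Since the paper states that theorem as an equality, your use of it is legitimate. In exchange, the paper's approach produces an explicit coupling certifying the lower bound.
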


\begin{proof}
For the upper bound, we consider the function $f_U:N(x)\cup N(y)\rightarrow\mathbb{Z}$ for any $U\subseteq N_y$ given by
$$f_U(z)= 
\begin{cases}
-1, & \text { if } z\in N_x\setminus N_{H_{xy}}(U);\\
0, & \text { if } z\in \{x\} \cup N_y\setminus U; \\
1, & \text { if } z\in \{y\}\cup N_{H_{xy}}(U)\\
2, & \text { if } z\in U.
\end{cases}$$
Then, ~\cref{Curvature via the Laplacian} yields
\[\kappa_{\LLY}(x,y)\leq 2
 \left(
 \frac{|N_{H_{xy}}(U)|+1}{d_x}-\frac{|U|}{d_y}
 \right).\]
 The upper bound holds by taking the minimum over all $U\subseteq N_y$.

 Now we consider the lower bound.
Set $D\coloneqq\max\limits_{U\subseteq N_y}\{d_x|U|-d_y|N_{H_{xy}}(U)|\}$.
If $U=\emptyset$, then $d_x|U|-d_y|N_{H_{xy}}(U)|=0$.
Hence $D\geq 0$.
We construct a transport graph $H'_{xy}$ of $H_{xy}$ as follows.
Duplicate each vertex in \(N_x\) into \(d_y\) vertices, and duplicate each vertex in \(N_y\) into \(d_x\) vertices.
The adjacency relations of the duplicated vertices remain the same as those of their original vertices in \(H_{xy}\).
Now we obtain a bipartite graph \(H'_{xy}\) with the partition $ (H_x, H_y$), where $H_x$ and $H_y$ respectively correspond to the copies of vertices in $N_x$ and $N_y$.

Note that $D+ d_y|N_{H_{xy}}(U)|\geq d_x |U|$ for every subset $U\subseteq N_y$.
We have $|N_{H_x}(S)|\geq |S|-D$ for every subset $S\subseteq H_y$.
By~\cref{Hall-c}, $H'_{xy}$ contains a maximum matching of size at least $|H_y|-D$, denoted by $M$.
Denote the set of vertices in $H_y$ which are not matched in $M$ by $P$ and denote the set of vertices in $H_x$ which are not matched in $M$ by $Q$.
Note that there is no edge between $P$ and $Q$; otherwise we can find a larger matching.
For convenience, denote any copy of $z$ in $H_{xy}$ by $z_0$.
Let $c_{uv}$ be the number of edges in $M$ between the copies of $u\in N_x$  and $v\in N_y$.
\begin{case}
    $d_x\geq d_y$.
\end{case}

Since $|H_x|\geq |H_y|$, there exists an injection $\phi:P\rightarrow Q$. 
By the definition of $\phi$, the original vertex $v$ of $v_0\in P$ has distance $3$ from the original vertex $u$ of $\phi(v_0)$.
Let $h_{uv}$ be the number of pairs $(\phi(v_0),v_0)$ for $v_0\in P$.
Hence $c_{uv}\cdot h_{uv}=0$.
For any $\frac{1}{1+d_y}\leq p<1$, let $\pi_p: V\times V\to [0,1]$ be the map defined as follows:
    \begin{center}
    $\pi_p(u,v)=\begin{cases}
    p-\frac{1-p}{d_y}, &{\rm if}\ u=x, v=y;\\
    \frac{1-p}{d_y}, &{\rm if}\ u=v=x;\\
    \frac{1-p}{d_x}, &{\rm if}\ u=v=y;\\
    (c_{uv}+h_{uv})\cdot\frac{1-p}{d_xd_y}, &{\rm if}\ u\in N_x,v\in N_y;\\
    (d_y-\sum\limits_{w\in N_y} (c_{uw}+h_{uw}))\cdot\frac{1-p}{d_xd_y}, &{\rm if}\ u\in N_x,v=y;\\
    0, &{\rm otherwise}.
    \end{cases}$
    \end{center}
Clearly, $\pi_p$ is a transport plan.
Recall that $|M|+|P|=|H_y|=(d_y-1)d_x$.
Therefore,
\begin{align*}
    W(\mu_x^p,\mu_y^p)&\leq  \sum_{u,v\in V}\rho(u,v)\pi_p(u,v)\\
   & =\pi_p(x,y)+\sum_{\substack{u\in N_x,v\in N_y\\ c_{uv}>0}}\pi_p(u,v)+2\sum_{u\in N_x}\pi_p(u,y)+3\sum_{\substack{u\in N_x, v\in N_y\\h_{uv}>0}}\pi_p(u,v)\\
   &=\left(p-\frac{1-p}{d_y}\right)+\left(|M|\cdot \frac{1-p}{d_xd_y}\right)+2\left(\frac{1-p}{d_y}-\frac{1-p}{d_x}\right)+3\left(|P|\cdot\frac{1-p}{d_xd_y}\right)\\
   &=p+\frac{1-p}{d_y}-2\cdot\frac{1-p}{d_x}+(d_y-1)d_x\cdot\frac{1-p}{d_xd_y}+2|P|\cdot\frac{1-p}{d_xd_y}\\
   &\leq p+\frac{1-p}{d_y}-2\cdot\frac{1-p}{d_x}+(d_y-1)d_x\cdot\frac{1-p}{d_xd_y}+2D\cdot\frac{1-p}{d_xd_y}\\
   &=1-2\cdot\frac{1-p}{d_x}+2D\cdot\frac{1-p}{d_xd_y}.
\end{align*}
It follows that
    \begin{align*}
        \kappa_{\LLY}(x,y)=\lim_{p\to 1}\frac{1-W(\mu_x^p,\mu_y^p)}{1-p} \ge 2\min_{U\subseteq N_y}
 \left(
 \frac{|N_{H_{xy}}(U)|+1}{d_x}-\frac{|U|}{d_y}
 \right).
    \end{align*}

 \begin{case}
     $d_x< d_y$.
 \end{case}

Since $|H_x|< |H_y|$, there exists an injection $\psi:Q\rightarrow P$. 
By the definition of $\psi$, the original vertex $u$ of $u_0\in Q$ has distance $3$ from the original vertex $v$ of $\psi(u_0)$.
Let $g_{uv}$ be the number of pairs $(u_0,\psi(u_0))$ for $u_0\in Q$.
Hence $c_{uv}\cdot g_{uv}=0$.
For any $\frac{1}{1+d_x}\leq p<1$, let $\pi'_p: V\times V\to [0,1]$ be the map defined as follows:
    \begin{center}
    $\pi'_p(u,v)=\begin{cases}
    p-\frac{1-p}{d_x}, &{\rm if}\ u=x, v=y;\\
    \frac{1-p}{d_y}, &{\rm if}\ u=v=x;\\
    \frac{1-p}{d_x}, &{\rm if}\ u=v=y;\\
    (c_{uv}+g_{uv})\cdot\frac{1-p}{d_xd_y}, &{\rm if}\ u\in N_x,v\in N_y;\\
    (d_x-\sum\limits_{w\in N_x} (c_{wv}+g_{wv}))\cdot\frac{1-p}{d_xd_y}, &{\rm if}\ u=x,v\in N_y;\\
    0, &{\rm otherwise}.
    \end{cases}$
    \end{center}
Clearly, $\pi'_p$ is a transport plan.
Recall that $|M|+|Q|=|H_x|=(d_x-1)d_y$.
Therefore,
\begin{align*}
    W(\mu_x^p,\mu_y^p)&\leq  \sum_{u,v\in V}\rho(u,v)\pi'_p(u,v)\\
   & =\pi'_p(x,y)+\sum_{\substack{u\in N_x,v\in N_y\\ c_{uv}>0}}\pi'_p(u,v)+2\sum_{v\in N_y}\pi'_p(x,v)+3\sum_{\substack{u\in N_x, v\in N_y\\g_{uv}>0}}\pi'_p(u,v)\\
   &=\left(p-\frac{1-p}{d_x}\right)+\left(|M|\cdot \frac{1-p}{d_xd_y}\right)+2\left(\frac{1-p}{d_x}-\frac{1-p}{d_y}\right)+3\left(|Q|\cdot\frac{1-p}{d_xd_y}\right)\\
   &=p+\frac{1-p}{d_x}-2\cdot\frac{1-p}{d_y}+(d_x-1)d_y\cdot\frac{1-p}{d_xd_y}+2|Q|\cdot\frac{1-p}{d_xd_y}\\
   &\leq p+\frac{1-p}{d_x}-2\cdot\frac{1-p}{d_y}+(d_x-1)d_y\cdot\frac{1-p}{d_xd_y}+2(d_x-d_y+D)\cdot\frac{1-p}{d_xd_y}\\
   &=1-2\cdot\frac{1-p}{d_y}+2(d_x-d_y+D)\cdot\frac{1-p}{d_xd_y}.
\end{align*}
It follows that
    \begin{align*}
        \kappa_{\LLY}(x,y)=\lim_{p\to 1}\frac{1-W(\mu_x^p,\mu_y^p)}{1-p} \ge 2\min_{U\subseteq N_y}
 \left(
 \frac{|N_{H_{xy}}(U)|+1}{d_x}-\frac{|U|}{d_y}
 \right).
    \end{align*}

The proof is complete.
\end{proof}

\begin{corollary}\label{bipartite-curvature-lowerbound}
    Let $G$ be a bipartite graph.
    Let $xy$ be an edge in $E(G)$ such that $d_x\geq d_y$.
    Define $H\coloneqq G[N_x,N_y]$.
    If $|N_H(S)|/d_x\ge |S|/ d_y$ for every subset $S\subseteq N_y$, then $$\kappa_{\LLY}(x,y)=\frac{2}{d_x}.$$
\end{corollary}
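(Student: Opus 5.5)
This is a direct application of \cref{lem:hall-formula}: under the hypothesis we identify which $U$ attains the minimum. By \cref{lem:hall-formula},
\[
\kappa_{\LLY}(x,y)=2\min_{U\subseteq N_y}\left(\frac{|N_{H}(U)|+1}{d_x}-\frac{|U|}{d_y}\right),
\]
where $H=H_{xy}=G[N_x,N_y]$.

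For the upper bound, take $U=\emptyset$. Then $N_H(\emptyset)=\emptyset$, so this choice contributes $2\cdot\frac{1}{d_x}$, and hence $\kappa_{\LLY}(x,y)\le \frac{2}{d_x}$.

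For the lower bound, let $U\subseteq N_y$ be arbitrary. The hypothesis $|N_H(U)|/d_x\ge |U|/d_y$ gives
\[
\frac{|N_H(U)|+1}{d_x}-\frac{|U|}{d_y}\ \ge\ \frac{1}{d_x}.
\]
So every term in the minimum is at least $\frac{1}{d_x}$, and the minimum is exactly $\frac{1}{d_x}$, attained at $U=\emptyset$. Multiplying by $2$ gives $\kappa_{\LLY}(x,y)=\frac{2}{d_x}$.

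There is no real obstacle here; all the work is in \cref{lem:hall-formula}. The only thing to check is that the corollary's $H$ coincides with $H_{xy}$ in the lemma, which it does by definition. The assumption $d_x\ge d_y$ is not used in this derivation, since the formula is symmetric in its use of $N_y$. It presumably just records the setting in which the hypothesis is natural, namely that $|N_x|\ge|N_y|$-type expansion is plausible, and it does not need to be invoked.
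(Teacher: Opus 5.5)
Your proof is correct, and your lower bound is exactly the paper's: every term in the formula of \cref{lem:hall-formula} is at least $1/d_x$ by hypothesis. The only difference is the upper bound. The paper cites \cref{LLYupperbound}, which for a bipartite edge (no common neighbours) gives $\kappa_{\LLY}(x,y)\le 2/\max\{d_x,d_y\}$, and then uses $d_x\ge d_y$ to identify this with $2/d_x$. You instead take $U=\emptyset$ in the formula, which is equivalent to using the test function $f_{\emptyset}$ from the lemma's proof. This keeps the whole argument inside a single lemma and never touches the degree assumption.

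Your closing explanation for why $d_x\ge d_y$ is not needed ("symmetry of the formula") is not the actual reason, though. The $U=\emptyset$ term gives $2/d_x$ whatever the degrees are. In any case, the hypothesis itself forces $d_x\ge d_y$: taking $S=N_y$, with $|N_y|=d_y-1$ and $N_H(N_y)\subseteq N_x$, gives $\frac{d_x-1}{d_x}\ge\frac{|N_H(N_y)|}{d_x}\ge\frac{d_y-1}{d_y}$.
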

\begin{proof}
    Combined with~\cref{lem:hall-formula} and~\cref{LLYupperbound}, the corollary holds immediately.
\end{proof}

\begin{proof}[\bf{{Proof of~\cref{thm:main}}}]
    Choose an edge $xy\in E$.
    Without loss of generality, assume that $d_x\ge d_y$ (we only use the fact $m\leq n$ for the case $\delta(G)=m$).  
Consider the local bipartite graph $H\coloneqq G[N_x,N_y]$.
Our goal is to prove that for every edge $xy$ such that $d_x\ge d_y$ and for every subset $S\subseteq N_y$,
\[
   \frac{|N_H(S)|}{d_x}\ge \frac{|S|}{d_y}.
\]

If $\delta(G)=n$, then every vertex in $X$ is adjacent to all vertices of $Y$, and hence $G\cong K_{m,n}$.  
In this case $H\cong K_{m,n}$.
Thus the estimate holds immediately since $|N_H(S)|=d_x-1$ for any non-empty set $S\subseteq N_y$  while $|S|\le d_y-1$ and $d_x\ge d_y$.

It remains to consider the case
$ \delta\coloneqq\delta(G)\ge \floor{\frac{m+n}{3}}+1$.
Let $P_x$ be the part containing $x$, and let $P_y$ be the other part containing $y$.  
Thus $|P_x|+|P_y|=m+n$.  
Since $\delta$ is an integer and $\delta>(m+n)/3$, we have $ 3\delta-(m+n)-1\ge0$.
Take $S\subseteq N_y$.

First suppose $|S| >|P_x|-\delta$. 
As every vertex $u\in N_x$ lies in $P_y$, each $u$ has at most $|P_x|-\delta$ non-neighbors in $P_x$.  
Since $|S| >|P_x|-\delta$, every $u\in N_x$ is adjacent to at least one vertex of $S$.  
Hence $N_H(S)=N_x$.
It follows that
\[
   \frac{|N_H(S)|}{d_x}=\frac{d_x-1}{d_x}
   \ge \frac{d_y-1}{d_y}
   \ge \frac{|S|}{d_y}.
\]

Then suppose $|S|\le |P_x|-\delta$.  
If $S=\emptyset$, there is nothing to prove.  For $S\neq \emptyset$,  choose a vertex $v\in S$.  
Note that  $v$ lies in $P_x$.  
Thus, 
\[
   |N_H(v)|
   \ge d_v-\bigl(|P_y|-(d_x-1)\bigr)
   \ge \delta-|P_y|+d_x-1,
\]
where $|P_y|-(d_x-1)$ denotes the size of $P_y\setminus N_x$.
Therefore $|N_H(S)|\ge \delta-|P_y|+d_x-1$.  
Since $|S|\le |P_x|-\delta$, it suffices to show
\[
   \frac{\delta-|P_y|+d_x-1}{d_x}\ge \frac{|P_x|-\delta}{d_y}.
\]
Indeed, 
\begin{align*}
    d_y(\delta-|P_y|+d_x-1)-d_x(|P_x|-\delta)& = d_x(d_y-|P_x|+\delta)+d_y(\delta-|P_y|-1)\\
    &\geq d_y(d_y+2\delta-|P_x|-|P_y|-1)\\
    &\geq d_y(3\delta-m-n-1)\geq 0.
\end{align*}

Overall, we have 
\[
   \frac{|N_H(S)|}{d_x}\ge \frac{|S|}{d_y}.
\]
for every subset $S\subseteq N_y$.
The result follows immediately by~\cref{bipartite-curvature-lowerbound}.
\end{proof}
 
Now we discuss the sharpness.

\begin{theorem}
\label{sharpness}
Let $m\ge n\ge2$.
Let $r\coloneqq \min\left\{n,\left\lfloor\frac{m+n}{3}\right\rfloor+1\right\}-1$.
Then there exists a bipartite graph $G_0=(X,Y;E)$ with $|X|=m$, $|Y|=n$, and $\delta(G_0)=r$, containing an edge of non-positive Lin--Lu--Yau curvature.
\end{theorem}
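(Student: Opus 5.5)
The plan is to realise an edge $xy$ together with a set $U\subseteq N_y$ for which the minimum in \cref{lem:hall-formula} is $\le 0$. That is, we want
\[
(|N_{H_{xy}}(U)|+1)\,d_y\le |U|\,d_x ,
\]
while keeping every degree $\ge r$. The extremal configuration is suggested by the equality cases in the proof of \cref{thm:main}:
\begin{itemize}
\item $d_x=d_y=r$;
\item $|U|$ as large as the non-neighbour count allows;
\item all vertices of $U$ share a common neighbourhood that meets $N_x$ in as few vertices as possible;
\item the remaining vertices $Z$ of $N_x$ miss all of $U$.
\end{itemize}
We then split into the two regimes of $r$.

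\textbf{Regime $r=n-1$, i.e.\ $n\le\floor{\frac{m+n}{3}}$, so $m\ge 2n$.} We take $x\in X$, $y\in Y$, with distinct vertices $z,y_0\in Y\setminus\{y\}$ (this needs $n\ge3$). The edges are as follows.
\begin{itemize}
\item $x$ misses only $y_0$.
\item $y$ is adjacent to $x$ and to a set $U\subseteq X$ of $n-2$ further vertices. Each vertex of $U$ misses only $z$.
\item The remaining $m-n+1$ vertices of $X$ miss only $y$.
\end{itemize}
Every vertex of $X$ then has degree $n-1$. The degrees in $Y$ are $d_y=n-1$, $d_z=m-n+2$, $d_{y_0}=m-1$, and $m$ for the others, all $\ge n-1$. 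We have $N_x=Y\setminus\{y,y_0\}$, $N_y=U$, and $N_{H_{xy}}(U)=N_x\setminus\{z\}$ of size $n-3$. \cref{lem:hall-formula} gives
\[
\kappa_{\LLY}(x,y)\le 2\Bigl(\tfrac{n-2}{n-1}-\tfrac{n-2}{n-1}\Bigr)=0 .
\]
The case $n=2$ ($r=1$) is degenerate and I would handle it directly, e.g.\ by a path-like graph with an edge of degree-$1$/degree-$2$ endpoints. I expect the intended construction may differ there, and this is a point to check.

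\textbf{Regime $r=\floor{\frac{m+n}{3}}\le n-1$.} Write $P_x,P_y$ for the parts containing $x,y$, so $3r\le m+n=|P_x|+|P_y|$. Set $d_x=d_y=r$ and choose $U\subseteq N_y\subseteq P_x$ with $|U|=|P_x|-r$. This needs $|P_x|-r\le r-1$. Give all of $U$ the common neighbourhood $(P_y\setminus N_x)\cup T$, where $T\subseteq N_x$ has $|T|=2r-|P_y|-1$ (this needs $2r\ge|P_y|+1$). Each $u\in U$ then has degree exactly $r$. Let $Z=N_x\setminus T$, with $|Z|=|P_y|-r$, and make each $z\in Z$ adjacent to exactly $P_x\setminus U$, which has $r$ vertices. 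Every other vertex is made adjacent to everything still permitted; in particular the vertices of $N_y\setminus U$ are joined to all of $P_y$. Then $|N_{H_{xy}}(U)|+1=2r-|P_y|$, and the sign condition becomes
\[
2r-|P_y|\le |P_x|-r ,
\]
which is exactly $3r\le m+n$. So $\kappa_{\LLY}(x,y)\le 0$ by \cref{lem:hall-formula}.

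\textbf{Main obstacle: choosing which part plays $P_x$ so that the side constraints hold.} The feasibility conditions are $|P_x|\le 2r-1$, $|P_y|\le 2r-1$, $r\le|P_y|-1$, and all degrees $\ge r$, with minimum degree exactly $r$ attained at $x$. My plan is to take $P_x=X$, $P_y=Y$ (so $x\in Y$), which works when $2r\ge m+1$ and $2r\ge n+1$. When $2r\le m$, I would instead use the simpler construction from the first regime with $d_x=d_y=r$: a single $z$ missed by $U=N_y$ of size $r-1$. Its only constraint is $m-r+1\ge r$, so the two sub-constructions cover complementary ranges. The main work is then verifying all degree bounds in each sub-case, including small boundary values of $m,n$.
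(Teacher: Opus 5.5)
For $n\ge 3$ your constructions are correct, and they are essentially the paper's graph $G_0$. Its blocks $I,P,R$ are your $U$, $N_y\setminus U$ and the non-neighbours of $y$ in $P_x$. Its blocks $J,Q,S$ are your $Z$, $T$ and the non-neighbours of $x$ in $P_y$. Its test function is exactly $f_U$ from \cref{lem:hall-formula}. The paper couples $|U|=|T|+1=i$ and lets $i$ range over $[\max\{1,2r-n\},\min\{r-1,m-r\}]$, which is nonempty for $n\ge 3$; that single family covers all cases at once. Your extreme choices $|U|=|P_x|-r$ and $|T|=2r-|P_y|-1$ are what force your split at $m=2r$. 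Your ``simple'' construction, and your first regime, are the paper's case $i=r-1$.

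There are two harmless slips. First, $r=n-1$ is equivalent to $n\le\lfloor\frac{m+n}{3}\rfloor+1$, not to $n\le\lfloor\frac{m+n}{3}\rfloor$; the boundary case is covered by your second regime anyway. Second, ``$P_x=X$ (so $x\in Y$)'' is self-contradictory, but your feasibility constraints are symmetric in $P_x,P_y$.

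\textbf{The genuine gap is the case $n=2$, where $r=1$.} This case is part of the statement, and neither of your constructions reaches it: your first regime explicitly needs $n\ge3$, and both second-regime constructions need $r\ge 2$. The witness you suggest cannot work, because an edge with an endpoint of degree $1$ is always positively curved. For a tree edge whose endpoints have degrees $1$ and $2$, \cref{no C3 C4 C5} gives $\kappa_{\LLY}=2/1+2/2-2=1>0$. More generally, if $d_x=1$ then $N_x=\emptyset$, and \cref{lem:hall-formula} (minimised at $U=N_y$) gives $\kappa_{\LLY}(x,y)=2/d_y>0$.

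So when $\delta=1$, the non-positively curved edge must have both endpoints of degree at least $2$. The paper does this as follows:
\begin{itemize}
\item take $Y=\{y_1,y_2\}$ and join one vertex $u\in X$ to both $y_1$ and $y_2$;
\item join a second vertex $v\in X$ only to $y_2$;
\item join every remaining vertex of $X$ only to $y_1$.
\end{itemize}
This is a tree with $\delta=1$ and $d_u=d_{y_2}=2$, so \cref{no C3 C4 C5} gives $\kappa_{\LLY}(u,y_2)=1+1-2=0$.
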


\begin{proof}
First, we suppose that $n\geq 3$.
Note that $r\le n-1$ and $3r\le m+n$.
Choose an integer $i$ such that $\max\{1,2r-n\}\le i\le \min\{r-1,m-r\}$, and let $j=r-i$.
Partition $X=\{x\}\sqcup I\sqcup P\sqcup R$
with $|I|=i$, $|P|=j-1$, and $|R|=m-r$, and partition $Y=\{y\}\sqcup J\sqcup Q\sqcup S$
 with $|J|=j$, $|Q|=i-1$, and $|S|=n-r$.
The edge set is defined as $E\coloneqq\{xy\}\sqcup E_1\sqcup E_2\sqcup E_3\sqcup E_4$, where $E_1\coloneqq\{xv\ |\  v\in J\cup Q\}$, $E_2\coloneqq\{yv\ |\ v\in I\cup P\}$, $E_3\coloneqq \{ uv\ |\ u\in  I\cup P\cup R,\ v\in  Q\cup S\}$, and $E_4\coloneqq \{ uv\ |\ u\in  P\cup R,\ v\in J\}$.
A schematic of $G_0$ is depicted in Figure~\ref{G}.
It is direct to check that $\delta(G_0)=r$.
Consider the function $f:N(x)\cup N(y)\rightarrow\mathbb{Z}$ given by
$$f(z)= 
\begin{cases}
-1, & \text { if } z\in J;\\
0, & \text { if } z\in \{x\} \cup P; \\
1, & \text { if } z\in \{y\}\cup Q;\\
2, & \text { if } z\in I.
\end{cases}$$
Then, $f(y)-f(x)=1$ and $f\in Lip(1)$.
By~\cref{Curvature via the Laplacian}, we have 
\[\kappa_{\LLY}(x,y)\leq \frac{1}{r}\left(-|J|+|Q|+1-0\right)-\frac{1}{r}\left(2|I|+0-r\right)= 0.\]

For $n=2$, let $Y=\{y_1,y_2\}$.
Choose distinct $u,v\in X$.
Join $u$ to both $y_1$ and $y_2$, and join $v$ only to $y_2$. 
Join every remaining vertex of $X$ only to $y_1$.  Then $\delta(G_0)=r=1$ and $d_u=d_{y_2}=2$.  
It follows from~\cref{no C3 C4 C5} that
\[
\kappa_{\LLY}(u,y_2)=\frac{2}{2}+\frac{2}{2}-2=0.
\]
The proof is complete.
\end{proof}

\begin{figure}[htbp]
    \centering    \begin{tikzpicture}[
    block/.style={draw, rounded corners=2.2pt, minimum width=1.15cm, minimum height=0.74cm, inner sep=2pt, font=\Large},
    vertex/.style={circle, fill=black, inner sep=2.1pt},
    header/.style={font=\Large},
    ed/.style={line width=0.75pt, line cap=round},
    noed/.style={line width=0.60pt, dashed, line cap=round},
]

\def\L{0}
\def\R{8.0}
\node[header] at (\L,5.65) {$X$};
\node[header] at (\R,5.65) {$Y$};

\node[vertex,label=left:{\Large $x$}] (x) at (\L,5.0) {};
\node[block] (I) at (\L,3.55) {$I$};
\node[block] (P) at (\L,2.10) {$P$};
\node[block] (R) at (\L,0.65) {$R$};

\node[vertex,label=right:{\Large $y$}] (y) at (\R,5.0) {};
\node[block] (J) at (\R,3.55) {$J$};
\node[block] (Q) at (\R,2.10) {$Q$};
\node[block] (S) at (\R,0.65) {$S$};

\draw[ed, ultra thick] (I.east) -- (Q.west);
\draw[ed, ultra thick] (I.east) -- (S.west);
\draw[ed, ultra thick] (P.east) -- (Q.west);
\draw[ed, ultra thick] (P.east) -- (S.west);
\draw[ed, ultra thick] (R.east) -- (Q.west);
\draw[ed, ultra thick] (R.east) -- (S.west);
\draw[ed, ultra thick] (P.east) -- (J.west);
\draw[ed, ultra thick] (R.east) -- (J.west);

\draw[ed] (x) -- (y);          
\draw[ed, ultra thick] (x) -- (J.west);     
\draw[ed, ultra thick] (x) -- (Q.west);     
\draw[ed, ultra thick] (y) -- (I.east);     
\draw[ed, ultra thick] (y) -- (P.east);     

\end{tikzpicture}
    \caption{A schematic of the graph $G_0$.}
    \label{G}
\end{figure}

Based on~\cref{sharpness}, we prove~\cref{connectivity}.

\begin{proof}[Proof of~\cref{connectivity}]
By~\eqref{whitney}, we have $\delta(G)\geq k(G)\geq \cR(m,n)$ (or $\delta(G)\geq k'(G)\geq \cR(m,n)$.
Hence $G$ has positive Lin-Lu-Yau curvature by~\cref{thm:main}.
To illustrate the threshold is sharp, we only need to prove that $G_0$ satisfies $\delta(G_0)=k(G_0)$.

It remains to show that no set of fewer than \(r\) vertices disconnects \(G_0\).
Let \(U\subseteq V(G_0)\) with \(|S|<r\).

For $n=1$, there is nothing to prove.
For $n=2$, $G_0$ is a tree.
Hence $\delta(G_0)=k(G_0)$.

For $n\geq 3$, if \(P\setminus U\neq\emptyset\), choose \(p\in P\setminus U\). 
Since every vertex of \(P\) is adjacent to every vertex of \(Y\), all vertices of \(Y\setminus U\) lie in the component of \(p\). 
Moreover, as \(d(z)\ge r>|U|\), every vertex \(z\in X\setminus U\) has a neighbor in \(Y\setminus U\).
Hence \(z\) also belongs to the component of \(p\). 
Therefore \(G-U\) is connected.
Similarly, if \(Q\setminus U\neq\emptyset\), then \(G-U\) is connected.
It remains to consider $P\setminus U=Q\setminus U=\emptyset$, which implies $P\subseteq T$ and $ Q\subseteq T$.
Since $|P|+|Q|=r-2$ and \(|U|<r\), at most one vertex outside \(P\cup Q\) is deleted. 
Observe that \(G-(P\cup Q)\) contains the blow-up of the cycle $x-y-I-S-R-J-x$.
Therefore, removing at most one additional vertex does not break the connectedness of \(G-(P\cup Q)\).
Thus \(G-U\) is still connected.

Overall, every vertex cut of $G_0$ has size at least $r$, which means $k(G_0)\geq \delta(G_0)= r$.
Combined with~\eqref{whitney}, we obtain
$k(G_0)=k'(G_0)=\delta(G_0)=r$.
\end{proof}

\section{Curvature extremal problem on bipartite graphs}\label{sec:extremal}

In this section, we consider the extremal bipartite graphs.

\begin{proof}[Proof of~\cref{main}]
Suppose that there are at most $D(m,n)-1$ edges in the complement graph $\overline{G}$ of $G=(X, Y;E)$.
Assume that there is an edge $xy$ in $G$ such that  $\kappa_{\LLY}(x,y)\leq 0$. 
By~\cref{no C3 C4 C5}, we know $d_x\geq 2$ and $d_y\geq 2$.
 Without loss of generality, let $x\in X$ and $y\in Y$.
 Define $H\coloneqq G[N_x,N_y]$.
 By~\cref{lem:hall-formula}, there exists $S\subseteq N_y$ such that $(|N_{H}(S)|+1)d_y\leq |S|d_x$.
 We count the number of edges in $\overline{G}$.
 Observe that there is no edge between $S$ and $N_x\setminus N_H(S)$ in $G$.
 Let $|N_x\setminus N_H(S)|=r$ and $|S|=s$.
  $(|N_{H}(S)|+1)d_y\leq |S|d_x$ implies that $r\geq\frac{(d_y-s)d_x}{d_y}$.
 Therefore, 
 \begin{align*}
     e(\overline{G})&\geq rs+n+m-d_x-d_y\\
     &\geq  \frac{(d_y-s)s\cdot d_x}{d_y}+n+m-d_x-d_y\\
     &\geq \frac{(d_y-1)\cdot d_x}{d_y}+n+m-d_x-d_y\\
     &=n+m-d_y-\frac{d_x}{d_y}.
 \end{align*}
 Since $2\leq d_y\leq m$ and $2\leq d_x \leq n$, $e(\overline{G})> D(m,n)-1$, a contradiction.
 Therefore, for any edge $xy$, $\kappa_{\LLY}(x,y)>0$.

 Now we consider the sharpness.
 For $n\geq 2m$,
let $G_1=(X, Y;E)$ be the bipartite graph  with $|X|=m$ and $|Y|=n$ defined as follows.
The vertex sets $X\coloneqq\{x,w\}\sqcup J$ and $Y\coloneqq \{y\}\sqcup P\sqcup Q$, where $|Q|=\ceil{\frac{n}{2}}$.
The edge set 
$E\coloneqq  E_1\sqcup E_2\sqcup E_3,$ where $E_1\coloneqq\{xv\ |\ v\in Y\}$, $E_2\coloneqq\{wv\ |\ v\in \{y\}\cup P\}$, and $E_3\coloneqq\{uv\ |\ u\in J, v\in  P\cup Q\}$.
Then, $G_1$ has exactly $mn-D(m,n)$ edges.
A schematic of $G_1$ is depicted in~\cref{G_1}.
\begin{figure}[htbp]
    \centering    \begin{tikzpicture}[
    line cap=round,
    line join=round,
    edge/.style={
        draw=black,
        line width=1pt
    },
    vertex/.style={
        circle,
        fill=black,
        inner sep=2.2pt
    },
    part/.style={
        rounded rectangle,
        rounded rectangle arc length=180,
        draw=black,
        fill=white,
        line width=1pt,
        minimum width=2.8cm,
        minimum height=1.25cm,
        inner sep=0pt,
        outer sep=0pt,
        font=\Large
    }
]

\coordinate (x) at (0,3);
\coordinate (y) at (0,0);
\coordinate (w) at (2.65,3);

\node[part] (P) at (3.75,0) {$P$};
\node[part] (J) at (6.35,3) {$J$};
\node[part] (Q) at (8.35,0) {$Q$};

\draw[edge] (x) -- (y);
\draw[edge] (y) -- (w);

\draw[edge] (x) -- (P);
\draw[edge] (w) -- (P);

\draw[edge] (x) -- (Q);

\draw[edge] (P) -- (J);
\draw[edge] (J) -- (Q);

\node[
    vertex,
    label={[font=\Large]above left:$x$}
] at (x) {};

\node[
    vertex,
    label={[font=\Large]below left:$y$}
] at (y) {};

\node[
    vertex,
    label={[font=\Large]above:$w$}
] at (w) {};

\end{tikzpicture}
    \caption{A schematic of the graph $G_1$.}
    \label{G_1}
\end{figure}

By~\cref{lem:hall-formula},
\begin{align*}
 \kappa_{\LLY}(x,y)\leq 2\left(\frac{|N_{P\cup Q}(w)|+1}{d_x}-\frac{1}{d_y}\right)=2\left( \floor{\frac{n}{2}}\cdot\frac{1}{n}-\frac{1}{2}\right)\leq 0.   
\end{align*}
 
 For $m\leq n<2m$, let $G_2=(X, Y;E)$ be the bipartite graph  with $|X|=m$ and $|Y|=n$ defined as follows.
The vertex sets $X\coloneqq\{x,w\}\sqcup A$ and $Y\coloneqq \{y\}\sqcup B$.
The edge set 
$E\coloneqq  E_1\sqcup E_2,$ where $E_1\coloneqq\{uy\ |\ u\in X\}$ and $E_2\coloneqq\{uv\ |\ u\in A\cup\{x\},v\in B\}$.
Then, $G_2$ has exactly $mn-D(m,n)$ edges.
A schematic of $G_2$ is depicted in~\cref{G_2}.
\begin{figure}[htbp]
    \centering    \begin{tikzpicture}[
    line cap=round,
    line join=round,
    edge/.style={
        draw=black,
        line width=1pt
    },
    vertex/.style={
        circle,
        fill=black,
        inner sep=2.2pt
    },
  part/.style={
        rounded rectangle,
        rounded rectangle arc length=180,
        draw=black,
        fill=white,
        line width=1pt,
        minimum width=2.6cm,
        minimum height=1.35cm,
        inner sep=0pt,
        outer sep=0pt,
        font=\Large
    }
]

\coordinate (x) at (0,3);
\coordinate (y) at (0,0);
\coordinate (w) at (2.7,3);

\node[part] (A) at (6.2,3) {$A$};
\node[part] (B) at (6.2,0) {$B$};

\draw[edge] (x) -- (y);
\draw[edge] (y) -- (w);
\draw[edge] (y) -- (A);
\draw[edge] (x) -- (B);
\draw[edge] (A) -- (B);

\node[
    vertex,
    label={[font=\Large]left:$x$}
] at (x) {};

\node[
    vertex,
    label={[font=\Large]left:$y$}
] at (y) {};

\node[
    vertex,
    label={[font=\Large]above:$w$}
] at (w) {};

\end{tikzpicture}
    \caption{A schematic of the graph $G_2$.}
    \label{G_2}
\end{figure}

By~\cref{lem:hall-formula},
\begin{align*}
 \kappa_{\LLY}(x,y)\leq 2\left(\frac{|N_{B}(w)|+1}{d_x}-\frac{1}{d_y}\right) =2\left(\frac{1}{n}-\frac{1}{m}\right)  \leq 0.
\end{align*}
The proof is complete.
\end{proof}

\section{Random bipartite graphs are positively curved}\label{sec:random_bipartite}

In this section, we prove the second result.

Let $G$ be a random bipartite graph $B(m,n,p)$ with parts $X$ and $Y$.  
Denote by $N = m + n$ and $s = \min(m,n)$.
Denote by $U = N_x$ and $V = N_y$. 
Let $H = G[U, V]$ be the subgraph on $U$ and $V$.

\begin{lemma}[{Chernoff bound~\cite{chernoff1981NoteInequalityInvolving} or~\cite[Lemma 5.2]{LLY11}}]\label{lem:Chernoff}
    Let $X_1, \ldots, X_n$ be independent random variables with
    \begin{equation*}
        \Pr\paren{X_i=1}=p_i, \quad \Pr\paren{X_i=0}=1-p_i.
    \end{equation*}
    The expectation of the sum $X = \sum_{i=1}^n X_i$ is $\Ex[X]=\sum_{i=1}^n p_i$. 
    Then
    \begin{align*}
        \Pr(X \leq \Ex[X] - \lambda) &\leq \exp\paren{-\lambda^2 / 2 \Ex[X]}, \\
        \Pr(X \geq \Ex[X] + \lambda) &\leq \exp\paren{-\lambda^2 /(2 \Ex[X]+2 \lambda / 3)}.
    \end{align*}
\end{lemma}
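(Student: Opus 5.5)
The plan is the standard exponential moment (Chernoff--Cram\'er) method, applied separately to the two tails. Write $\mu=\Ex[X]=\sum_i p_i$.

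\emph{Upper tail.} For any $t>0$, Markov's inequality applied to $e^{tX}$ gives
\[
\Pr(X\ge \mu+\lambda)\le e^{-t(\mu+\lambda)}\,\Ex\bigl[e^{tX}\bigr].
\]
By independence, $\Ex[e^{tX}]=\prod_i\bigl(1+p_i(e^t-1)\bigr)$. Using $1+z\le e^z$ this is at most $\exp\bigl(\mu(e^t-1)\bigr)$. I would then choose $t=\log(1+\lambda/\mu)$, which yields
\[
\Pr(X\ge\mu+\lambda)\le \exp\bigl(-\mu\, h(\lambda/\mu)\bigr),\qquad h(u)=(1+u)\log(1+u)-u.
\]

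\emph{Lower tail.} Apply the same argument to $e^{-tX}$ with $t>0$:
\[
\Pr(X\le \mu-\lambda)\le e^{t(\mu-\lambda)}\exp\bigl(\mu(e^{-t}-1)\bigr).
\]
For $0<\lambda<\mu$, choose $t=-\log(1-\lambda/\mu)$. This gives $\exp\bigl(-\mu\,h(-\lambda/\mu)\bigr)$ with the same $h$ on $(-1,0)$. The cases $\lambda\ge\mu$ are trivial: the probability is $0$ if $\lambda>\mu$, and if $\lambda=\mu$ it equals $\prod_i(1-p_i)\le e^{-\mu}\le e^{-\mu/2}$.

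\emph{Elementary inequalities.} It remains to prove
\[
h(-u)\ge u^2/2 \quad (0\le u<1),\qquad h(u)\ge \frac{u^2}{2+2u/3}\quad (u\ge0).
\]
Substituting $u=\lambda/\mu$ and multiplying by $\mu$ gives exactly $\lambda^2/2\mu$ and $\lambda^2/(2\mu+2\lambda/3)$.

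The first inequality is routine. Set $g(u)=h(-u)-u^2/2$. Then $g(0)=g'(0)=0$, and $g''(u)=\frac1{1-u}-1\ge0$, so $g\ge0$.

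The second inequality is the main (mild) obstacle. I would set $\phi(u)=h(u)-\frac{u^2}{2+2u/3}$. One checks $\phi(0)=\phi'(0)=0$. Then I would show $\phi''\ge0$: we have $h''(u)=1/(1+u)$, and $\frac{d^2}{du^2}\frac{3u^2}{6+2u}=\frac{108}{(6+2u)^3}$. So it suffices to check
\[
(6+2u)^3\ge 108(1+u)\quad\text{for } u\ge0,
\]
which follows by expanding: $216+216u+72u^2+8u^3\ge108+108u$.

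Since the statement is quoted from \cite{chernoff1981NoteInequalityInvolving} and \cite[Lemma 5.2]{LLY11}, in the paper it suffices to cite it. The argument above is the self-contained route if needed.
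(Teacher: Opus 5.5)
The paper gives no proof of this lemma; it only cites Chernoff and Lin--Lu--Yau. Your Cram\'er--Chernoff argument is the standard proof behind that citation: you reduce both tails to $\exp(-\mu h(\pm\lambda/\mu))$ with $\mu=\Ex[X]$, then bound $h$ from below. Everything up to the last step is correct: the moment generating function bounds, the choices of $t$, the separate handling of $\lambda\ge\mu$ in the lower tail, and the convexity argument for $h(-u)\ge u^2/2$. You should state explicitly that $\lambda>0$ and $\mu>0$, since you need them for $t>0$ and the statement leaves them tacit.

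The final step, however, rests on a wrong derivative. In fact
\[
\frac{d^2}{du^2}\,\frac{3u^2}{6+2u}=\frac{216}{(6+2u)^3}=\frac{27}{(3+u)^3},
\]
not $108/(6+2u)^3$. A quick check: $h(u)$ and $u^2/(2+2u/3)$ both expand as $u^2/2-u^3/6+\cdots$, so both second derivatives equal $1$ at $u=0$, whereas your formula would give $\phi''(0)=1/2$.

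So the inequality you verify, $(6+2u)^3\ge 108(1+u)$, does not imply $\phi''\ge 0$. What is actually needed is $(6+2u)^3\ge 216(1+u)$, i.e.\ $(1+u/3)^3\ge 1+u$. This is still true, because $(6+2u)^3-216(1+u)=72u^2+8u^3\ge 0$. It is an equality at $u=0$, so there is no slack; this reflects the fact that the constant $2/3$ in $u^2/(2+2u/3)$ cannot be decreased.

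With this correction the upper tail, and hence your whole proof, goes through. The lower tail, which is the only one invoked in the proof of \cref{thm:random_bipartite_positive}, is unaffected by the slip.
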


\begin{lemma}\label{lem:weighted_hall_failure}
    Let $H$ be a random bipartite graph $B(a,b, p)$ with $1 \leq a \leq b \leq N$. 
    For every $K > 0$, there exists $C_K > 0$ such that $p a \geq C_K \log N$ implies that
    \begin{align*}
        \Pr\paren*{\exists S \subseteq X: \card{N_H(S)} < \frac{b+1}{a+1} \card{S}} \leq N^{-K}. 
    \end{align*}
    In particular, suppose $xy \in E(G)$, then by~\cref{bipartite-curvature-lowerbound}, we have $p a \geq C_K \log N$ implies that
    \begin{align*}
        \Pr\paren*{\kappa_{\LLY}(x,y) \neq \frac{2}{\max(d_x, d_y)} \mid xy \in E(G)} \leq N^{-K}. 
    \end{align*}
\end{lemma}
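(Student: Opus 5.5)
The plan is a first-moment (union bound) argument over possible Hall violators $S$, together with a description of the witness set of non-neighbours. Write $X$ for the side of size $a$ and $Y$ for the side of size $b$. If $|N_H(S)|<\frac{b+1}{a+1}|S|$ with $|S|=k\ge 1$, set $T\coloneqq Y\setminus N_H(S)$ and $t\coloneqq|T|$. Then there are no edges between $S$ and $T$, and
\[
t> b-\frac{(b+1)k}{a+1}=\frac{(b-a)+(b+1)(a-k)}{a+1}.
\]
For fixed $k$ and $t$, the probability that such a pair $(S,T)$ exists is at most
\[
\binom{a}{k}\binom{b}{t}(1-p)^{kt}\le \exp\bigl(\log\tbinom{a}{k}+\log\tbinom{b}{t}-pkt\bigr).
\]
I will show that each such term is at most $N^{-K-2}$ once $C_K$ is large. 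Since there are at most $N^2$ pairs $(k,t)$, this gives the bound $N^{-K}$. I split according to the size of $k$.

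\emph{Small $S$, $k\le a/2$.} I bound $\binom{a}{k}\le N^{k}$ and $\binom{b}{t}=\binom{b}{b-t}\le N^{b-t}$. Here
\[
b-t<\frac{(b+1)k}{a+1}\le \frac{2bk}{a},
\qquad
t\ge b-\frac{(b+1)k}{a+1}\ge \frac{b}{2}-1 \ge \frac{b}{3}
\]
(the case $b\le 5$ is trivial or absorbed into constants). The exponent is therefore at most
\[
\Bigl(k+\frac{2bk}{a}\Bigr)\log N-\frac{pkb}{3}\le \frac{3bk}{a}\log N-\frac{C_K}{3}\cdot\frac{bk}{a}\log N ,
\]
using $pa\ge C_K\log N$. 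This is at most $-(K+2)\log N$ for large $C_K$, since $bk/a\ge 1$.

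\emph{Large $S$, $k>a/2$.} Put $j\coloneqq a-k$. The displayed inequality gives $t>\frac{(b-a)+(b+1)j}{a+1}$, so $t\ge 1$, and $t\ge j$ because $b\ge a$. Now $\binom{a}{k}=\binom{a}{j}\le N^{j}\le N^{t}$ and $\binom{b}{t}\le N^{t}$, while $pkt\ge pat/2\ge \frac{C_K}{2}t\log N$. The exponent is thus at most $(2-C_K/2)\,t\log N\le -(K+2)\log N$.

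The case $S=X$ is included here ($j=0$). It reduces to "some vertex of $Y$ is isolated", which is exactly the $t\ge1$ term above. So the bookkeeping is uniform, and the main step I expect to need care is just getting the lower bound on $t$ correct in each regime. Note that the threshold $\frac{b+1}{a+1}$ rather than $\frac ba$ is precisely what forces $t\ge1$ when $k=a$, which uses $b\ge a$.

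For the ``in particular'' part, condition on the edge $xy$ and on the neighbourhoods $N(x),N(y)$. These are determined by edges incident to $x$ or $y$, so the edges of $H=G[N_x,N_y]$ remain independent $\mathrm{Bernoulli}(p)$. Thus $H\sim B(a,b,p)$ with $\{a+1,b+1\}=\{d_x,d_y\}$. Assume $d_x\ge d_y$ and take the smaller side to be $N_y$, so that $a+1=d_y$ and $b+1=d_x$. The event $|N_H(S)|\ge \frac{d_x}{d_y}|S|$ for all $S\subseteq N_y$ is exactly the hypothesis of \cref{bipartite-curvature-lowerbound}, which yields $\kappa_{\LLY}(x,y)=2/\max(d_x,d_y)$. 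The remaining subtlety is that $a=\min(d_x,d_y)-1$ is itself random. I would state the bound conditionally on degrees satisfying $pa\ge C_K\log N$, and later combine it with a Chernoff estimate (\cref{lem:Chernoff}) showing $d_x,d_y\ge ps/2$ with high probability.
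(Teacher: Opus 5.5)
Your proof is correct and follows essentially the paper's argument. Both use a first-moment bound over pairs $(S,T)$ with $T\subseteq Y\setminus N_H(S)$ and no $S$--$T$ edges. Your split $k\le a/2$ versus $k>a/2$ plays the role of the paper's $q_r=\min(r,a+1-r)$. Your crude bounds $\binom{a}{k}\le N^{k}$ and $\binom{b}{t}\le N^{\min(t,b-t)}$ make explicit the constants $c_0,C_0$ that the paper only asserts, and your conditioning on $N(x),N(y)$ spells out the ``in particular'' step that the paper leaves implicit.

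One harmless slip is your closing remark. Since $\frac{b+1}{a+1}\le\frac{b}{a}$, the threshold $\frac{b}{a}$ would equally force $t\ge 1$ at $k=a$. The ratio $\frac{b+1}{a+1}$ is chosen simply because it equals $d_x/d_y$, as \cref{bipartite-curvature-lowerbound} requires.
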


\begin{proof}
    Fix $S \subseteq X$, and denote by $\card{S} = r$. 
    If $|N_H(S)| < \frac{b+1}{a+1}r$, then $|N_H(S)| \leq \ceil{\frac{b+1}{a+1}r} - 1$. 
    Therefore $Y \setminus N_H(S)$ contains a set $T$ of size $t_r = \floor{\frac{b+1}{a+1}(a+1-r)}$. 
    There are no edges between $S$ and $T$, hence
    \begin{align*}
        \Pr &\leq \sum_{r=1}^a \binom{a}{r} \binom{b}{t_r} (1-p)^{r t_r} \\
            &\leq \sum_{r=1}^a \binom{a}{r} \binom{b}{t_r} e^{-p r t_r}. 
    \end{align*}  
    Take $q_r = \min(r, a+1 - r)$. 
    There exist absolute constants $c_0, C_0$ such that
    \begin{align*}
        r t_r \geq c_0 b q_r \quad \text{ and }\quad
        \log\paren*{\binom{a}{r}\binom{b}{t_r}} \leq C_0 \frac{b}{a} q_r \log \frac{ea}{q_r}. 
    \end{align*}
    Therefore
    \begin{align*}
        \Pr &\leq \sum_{r=1}^a \exp\paren*{C_0 \frac{b}{a} q_r \log\frac{ea}{q_r} - c_0 p b q_r} \\
        &\leq \sum_{r=1}^a \exp\paren*{\frac{b}{a} q_r \paren*{C_0 \log \frac{ea}{q_r} - c_0 p a}}
    \end{align*}
    Note that $\log \frac{ea}{q_r} \leq \log (eN)$. 
    Choose $C_A$ sufficiently large makes every summand at most $N^{-A-2}$. 
\end{proof}

\begin{proof}[{Proof of~\cref{thm:random_bipartite_positive}}]
    Consider $\mathcal{D} = \set{\forall z \in X \cup Y: d_z \geq \frac{1}{2} p \card{P}}$, where $P$ is the opposite vertex part of $z$. 
    By the Chernoff tail inequality, we have 
    \begin{align*}
        \Pr(\mathcal{D}^c) &\leq m e^{-np/8} + n e^{-mp/8} \leq N e^{-ps/8}. 
    \end{align*}
    Since $p^2 s \geq C \log N$ and $p \leq 1$, we have $p s \geq p^2 s \geq C \log N$. 
    Thus for sufficiently large $C$, we have $\Pr(\mathcal{D}^c) \leq N^{-4}$. 

    Fix an edge $xy$ with $x, y \in X \cup Y$. 
    Then the subgraph $H_{xy} = G[U, V]$ is a random bipartite graph from $B(|U|, |V|, p)$. 
    Suppose the degrees satisfy $d_x \geq \frac{1}{2} n p$, $d_y \geq \frac{1}{2} m p$. 
    Let $a \coloneqq \min{d_x, d_y} - 1$. 
    Then $pa \geq \frac{1}{3} p^2 s$ for sufficiently large $N$. 
    Take $K = 5$ in~\cref{lem:weighted_hall_failure}, then 
    \begin{align*}
        \Pr\paren*{\kappa_{\LLY}(x, y) \neq \frac{2}{\max(d_x, d_y)} \mid xy \in E(G)} \leq N^{-5}. 
    \end{align*}
    Note that there are at most $mn \leq N^2$ edges, so
    \begin{align*}
        \Pr\paren*{\exists xy \in E(G) : \kappa_{\LLY}(x,y) \neq \frac{2}{\max(d_x, d_y)} \text{ and } \mathcal{D}} \leq N^{-3}. 
    \end{align*}
    The theorem follows by noting that $\frac{2}{\max(d_x, d_y)} > 0$. 
\end{proof}

\section{Concluding remark}\label{sec:random_graph}


The probabilistic result holds not only for random bipartite graphs, but also for random graphs. 

Lin, Lu and Yau~\cite{LLY11} have proved that for a fixed edge, the probability that its curvature is positive tends to 1. 

\begin{theorem}[{\cite[Theorem 5.1]{LLY11}}]\label{thm:all_edge_positive}
    Let $G$ be a random graph from $G(n, p)$. 
    Let $xy$ be an edge of $G$.  
    If $p \gg \sqrt[3]{(\ln n) / n}$, then
    \begin{align}
        \lim_{n \to \infty} \Pr\paren*{\kappa_{\LLY}(x, y) = (1 + \smallo(1))p} =  1.
    \end{align}
\end{theorem}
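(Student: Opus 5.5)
The statement is Theorem 5.1 of \cite{LLY11}: for $G\sim G(n,p)$ with $p\gg\sqrt[3]{(\ln n)/n}$, a fixed edge $xy$ satisfies $\kappa_{\LLY}(x,y)=(1+\smallo(1))p$ with probability tending to $1$. I would prove it by combining concentration of degrees and common neighbourhoods with an explicit transport plan for the lower bound and \cref{LLYupperbound} for the upper bound.

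\textbf{Step 1: concentration.} Condition on $xy\in E(G)$. By \cref{lem:Chernoff}, with probability $1-\smallo(1)$ we have
\[
d_x,d_y=(1+\smallo(1))np \quad\text{and}\quad |A_{xy}|=|N(x)\cap N(y)|=(1+\smallo(1))np^2 .
\]
This uses $np^2\gg\log n$, which follows from $p^3 n\gg\ln n$. It also follows that $|N_x|,|N_y|=(1+\smallo(1))np$.

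\textbf{Step 2: upper bound.} By \cref{LLYupperbound},
\[
\kappa_{\LLY}(x,y)\le \frac{|A_{xy}|+2}{\max(d_x,d_y)}=(1+\smallo(1))p .
\]

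\textbf{Step 3: lower bound.} I would build a transport plan for $p$-idle measures, with $p$ large as permitted by \eqref{eq:bourne}, and split the mass as follows.
\begin{itemize}
\item Mass on the common neighbours $A_{xy}$ stays in place, at cost $0$.
\item Mass at $x$ and $y$ is handled as in the proof of \cref{lem:hall-formula}, at cost $1$ for the $x\to y$ piece.
\item Mass on $N_x$ is moved to $N_y$ along a perfect (or near-perfect) matching in the random bipartite graph $H=G[N_x,N_y]$, at cost $1$ per matched pair.
\item The small degree imbalance between $d_x$ and $d_y$ is absorbed at cost at most $3$ per unit.
\end{itemize}

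For the matching, note that the edges inside $N_x\cup N_y$ are independent of the edges used to define the neighbourhoods. Conditionally on the neighbourhoods, $H$ is therefore distributed as $B(|N_x|,|N_y|,p)$ with both sides of size about $np$. Since $p\cdot np=np^2\gg\log n$, \cref{lem:weighted_hall_failure} gives a Hall-type condition, and hence a matching covering all but $\smallo(np)$ vertices, with high probability.

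\textbf{Step 4: computing the cost.} Everything except the common-neighbour mass moves distance about $1$. The resulting bound is
\[
W \le 1-(1-p)\,\frac{|A_{xy}|}{np}\,(1+\smallo(1)),
\]
which gives $\kappa_{\LLY}(x,y)\ge(1+\smallo(1))\,|A_{xy}|/(np)=(1+\smallo(1))p$.

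\textbf{Main obstacle.} The error terms must be $\smallo(p)$ rather than merely $\smallo(1)$. The degree fluctuation $|d_x-d_y|=O\big(\sqrt{np\log n}\big)$ costs $O\big(\sqrt{\log n/(np)}\big)$ after normalisation by $np$. Requiring this to be $\smallo(p)$ means $\log n\ll np^3$, which is precisely the hypothesis $p\gg\sqrt[3]{(\ln n)/n}$. This is why I expect the $1/3$ exponent to enter exactly here.

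The unmatched vertices in $H$ must likewise number $\smallo(np\cdot p)=\smallo(np^2)$. The Hall argument has to be carried out carefully so that the deficiency stays within this budget; I would first try to get this from \cref{lem:weighted_hall_failure}, using slack of the same Chernoff size.
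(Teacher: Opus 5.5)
Your proposal is correct and is essentially the argument the paper attributes to \cite{LLY11} and sketches in its concluding remark: Chernoff concentration of degrees and codegrees, then a matching in $G[N_x,N_y]$ that turns the injection-based transport bound into $\kappa_{\LLY}(x,y)\ge \frac{|A_{xy}|}{\max(d_x,d_y)}-O\big(\frac{|d_x-d_y|}{np}\big)$, where the degree imbalance is exactly what forces $np^3\gg\ln n$, with \cref{LLYupperbound} supplying the matching upper bound. The one slip is in Step 1: $|N_x|,|N_y|$ are $(1+\smallo(1))np(1-p)$, not $(1+\smallo(1))np$; this is harmless, since \cref{lem:weighted_hall_failure} still applies whenever $np^2(1-p)\gg\log n$, and otherwise $1-p=\smallo(1)$, so the mass on $N_x$ is only an $\smallo(p)$ fraction and can be moved at cost $3$ with no matching at all.
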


By careful analysis, one can obtain the following uniform version which holds for all edges. 
Note that `for each edge $xy$' is inside the limit and probability. 

\begin{theorem}
    Suppose $p \gg \sqrt[3]{(\ln n) / n} $. 
    Let $G$ be a random graph from $G(n, p)$. 
    Then
    \begin{align*}
        \lim_{n \to \infty} \Pr\paren*{ G \text{ is positively curved}} = 1.
    \end{align*}
\end{theorem}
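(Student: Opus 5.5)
The plan is to prove a uniform version of \cref{thm:all_edge_positive} by a union bound over all $\binom{n}{2}$ pairs. It suffices to show that for each fixed pair $\{x,y\}$,
\[
\Pr\paren*{xy\in E(G) \text{ and } \kappa_{\LLY}(x,y)\le 0}\le n^{-3}.
\]
Summing over pairs then gives failure probability at most $n^{-1}\to 0$. We may assume $p\le 1-\epsilon$ or treat $p\to 1$ separately: if $p$ is close to $1$ the graph is nearly complete and positivity follows from a density argument like \cref{thm:main} and \cite{chen2026ExtremalTheoremPositive}. The main regime is $p\to 0$ or $p$ bounded away from $1$.

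\textbf{Step 1: concentration on a good event.} We work on a good event $\mathcal{G}$ of probability $1-n^{-3}$ on which, for the fixed pair $x,y$ (conditioned on $xy\in E$), the following hold:
\[
d_x,d_y=(1\pm\epsilon)np,\qquad |A_{xy}|=(1\pm\epsilon)np^2,\qquad |N_x|,|N_y|=(1\pm\epsilon)np(1-p).
\]
These follow from \cref{lem:Chernoff}, since $np^2\gg \log n$ under $p\gg((\ln n)/n)^{1/3}$. The randomness among the vertices of $N_x\cup N_y\cup A_{xy}$ is independent of the edges at $x,y$. So, after exposing the neighborhoods of $x$ and $y$, the edges inside and between these sets remain i.i.d.\ with probability $p$.

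\textbf{Step 2: an explicit transport plan.} The plan is the one of \cite{LLY11}, now with failure probability controlled polynomially. Mass on $A_{xy}\cup\{x,y\}$ is handled at cost $\le 1$ each. Mass from $N_x$ is moved to $N_y$ along a matching at cost $1$, or through a triangle with a common neighbour. The remaining mass travels at cost $2$ or at worst $3$. Positivity of the curvature needs the saving from triangles, about $np^2$ mass units of distance saved, to beat the loss. It therefore suffices that the graph $G[N_x,N_y]$ contains a matching covering all but $o(np^2)$ vertices of the smaller side, with unmatched mass routed at distance $\le 2$.

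This is where the exponent $1/3$ enters. The bipartite random graph $G[N_x,N_y]$ has sides of size about $np$ and density $p$. A Hall-type argument, as in the proof of \cref{lem:weighted_hall_failure}, gives a near-perfect matching with failure probability $n^{-K}$ as long as $p\cdot np\ge C\log n$. That condition is weaker than what we assume. The cost bookkeeping then shows $W(\mu_x^{p'},\mu_y^{p'})\le 1-(1-p')(c\,p - o(p))$, giving $\kappa_{\LLY}(x,y)\ge (1+o(1))p>0$.

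\textbf{Main obstacle.} The step I expect to be hardest is making the error terms uniform. \cite{LLY11} only needs probability $1-o(1)$ per edge, and some of their estimates, such as second-moment or deviation bounds on $|A_{xy}|$, must be upgraded to tails of order $n^{-3}$. I would try first to redo each concentration with Chernoff at deviation $\lambda=\sqrt{C\,\mathbb{E}\log n}$. These deviations are $o(np^2)$ precisely because $np^2\gg\log n$, and indeed $np^3\gg\log n$ gives extra room. For the matching, I would reuse the counting of \cref{lem:weighted_hall_failure} with $a\approx np$.
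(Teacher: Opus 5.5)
Your outline follows the same route as the paper's sketch: Chernoff control of degrees and codegrees with polynomially small failure probability, a Lin--Lu--Yau type transport built on a near-perfect matching in $G[N_x,N_y]$ from a Hall-type union bound as in \cref{lem:weighted_hall_failure}, and bookkeeping against the triangle gain $|A_{xy}|/d\approx p$. The gap is in the bookkeeping: it ignores the loss caused by $d_x\neq d_y$. In the injection bound the paper quotes from \cite[Lemma 5.5]{LLY11}, this loss is the term $-3(d_y-d_x)/d_y$.

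In your language, every vertex of $N_x$ carries mass $(1-p')/d_x$, every vertex of $N_y$ carries $(1-p')/d_y$, and $\card{N_x}-\card{N_y}=d_x-d_y$. So a plan that moves mass along an integral matching leaves two things behind. First, $\card{d_x-d_y}$ vertices of the larger side are unmatched. Second, the matched vertices keep a residual of total mass about $(1-p')\card{d_x-d_y}/d$. Your plan has to route all of this at distance $2$ or $3$. Hence your sufficient condition in Step 2 (a matching missing only $o(np^2)$ vertices of the smaller side) is not sufficient; you also need $\card{d_x-d_y}=o(np^2)$. Step 1 only gives $d_x,d_y=(1\pm\epsilon)np$. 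That allows an imbalance loss of order $\epsilon$, which swamps the gain $p$ as $p\to 0$.

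The repair is additive concentration: $\card{d_z-(n-1)p}=O(\sqrt{np\log n})$ simultaneously for all $z$, with failure probability $O(n^{-3})$, exactly as in the paper's sketch. The imbalance loss is then $O(\sqrt{\log n/(np)})$, which is $o(p)$ precisely when $np^3\gg\log n$. That is where the exponent $1/3$ enters, not the matching step, which as you note only needs $np^2\gg\log n$. Your remark that $np^3\gg\log n$ ``gives extra room'' is in fact the crux. Your claim that the $\sqrt{C\,\Ex\log n}$ deviations are $o(np^2)$ ``because $np^2\gg\log n$'' is correct only for the codegrees.

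A smaller point: your dismissal of the case $p\to1$ is unjustified. The density theorem of \cite{chen2026ExtremalTheoremPositive} needs the complement to have at most about $3n/2$ edges, so $1-p=O(1/n)$, and \cref{thm:main} concerns bipartite graphs. No separate case is needed, though. For $p$ near $1$ the sets $N_x,N_y$ carry mass only about $1-p$, so even routing all of it at distance $3$ costs $O(1-p)$ against a gain of about $p$.
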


\begin{corollary}\label{thm:graph_complmentary_positive}
    Suppose $\sqrt[3]{(\ln n) / n} \ll p \ll 1 - 50 \sqrt[3]{(\ln n) / n}$. 
    Let $G$ be a random graph from $G(n, p)$. 
    Then
    \begin{align*}
        \lim_{n \to \infty} \Pr\paren*{ G \text{ and } \overline{G} \text{ are positively curved}} = 1.
    \end{align*}
\end{corollary}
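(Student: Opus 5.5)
The plan is to reduce the statement to two applications of the preceding uniform theorem, one for $G$ and one for $\overline{G}$, and then combine them with a union bound. Write $c_n\coloneqq\sqrt[3]{(\ln n)/n}$ and $q\coloneqq 1-p$.

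\textbf{Step 1: the complement is again a binomial random graph.} If $G\sim G(n,p)$, then $\overline{G}\sim G(n,q)$. Indeed, each of the $\binom{n}{2}$ pairs is an edge of $\overline{G}$ exactly when it is a non-edge of $G$. These events are independent and each has probability $1-p=q$.

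\textbf{Step 2: check the hypotheses for $p$ and for $q$.}
\begin{itemize}
\item The lower hypothesis $p\gg c_n$ is exactly the assumption of the uniform theorem for $G$. Hence $\Pr(G \text{ is positively curved})\to 1$.
\item For $\overline{G}$ I need $q\gg c_n$. I will read the upper hypothesis $p\ll 1-50c_n$ as saying that $p$ stays away from $1$ by much more than $c_n$, that is, $1-p\gg c_n$.
\item This reading is automatic if the hypothesis means $p/(1-50c_n)\to 0$. In that case $p\to 0$, so $q\to 1\gg c_n$.
\item If instead it means that $1-p$ exceeds $50c_n$ by a diverging margin, the conclusion $q\gg c_n$ follows directly.
\end{itemize}
With $q\gg c_n$, the uniform theorem applied to $G(n,q)$ gives $\Pr(\overline{G}\text{ is positively curved})\to 1$.

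\textbf{Step 3: combine.} Let $A$ be the event that $G$ is positively curved and $B$ the event that $\overline{G}$ is positively curved. Then
\[
\Pr(A\cap B)\ \ge\ 1-\Pr(A^c)-\Pr(B^c)\ \longrightarrow\ 1.
\]
Independence of $A$ and $B$ is not needed, which matters because they are strongly dependent.

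\textbf{Main obstacle.} The only delicate point is interpreting the upper hypothesis. The constant $50$ suggests the authors' uniform theorem is really proved under a condition of the form $p\ge C c_n$ for a specific absolute constant $C$, rather than $p\gg c_n$. If that is the case, I would revisit the uniform argument to extract the explicit constant. That argument is the analogue of \cref{lem:weighted_hall_failure}, combined with Chernoff bounds from \cref{lem:Chernoff} on common neighbourhoods and a union bound over at most $n^2$ edges. I would then verify that $q\ge 50c_n$ suffices for $G(n,q)$.

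Otherwise the corollary is purely formal once the uniform theorem is available.
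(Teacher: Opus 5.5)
Your proof is correct and follows the route the paper intends. Since $\overline{G}\sim G(n,1-p)$, the preceding uniform theorem applies to both $G$ and $\overline{G}$ once $1-p\gg\sqrt[3]{(\ln n)/n}$, and a union bound finishes. The paper's sketch only outlines the uniform theorem itself (degree and codegree concentration, the Lin--Lu--Yau lower bound, large matchings) and leaves this complement reduction implicit; under the literal $\ll$ reading of the hypothesis, no explicit constant such as $50$ needs to be extracted.
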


Here is a sketch of proof. 
It is similar to the original proof of~\cref{thm:all_edge_positive}.
Firstly, we use Chernoff tail bound to estimate the range of vertex degrees. 
More specifically, if $p \geq (16 \ln n)/(3n)$, then with probability at least $1 - 2/n^3$, all vertex degrees $d_x$ of $G(n, p)$ fall in the range
    \begin{equation*}
        \paren*{(n-1) p - \sqrt{8 n p \ln n}, (n-1) p + \sqrt{12 n p \ln n}}.
    \end{equation*}
Secondly, we use Chernoff tail bound to estimate the range of vertex codegrees. 
More specifically, if $p \geq \sqrt{(20 \ln n) / (3 n)}$, then with probability at least $1 - 1/n^3$, all codegrees $d_{xy}$ of $G(n,p)$ fall in the range
    \begin{equation*}
        \paren*{(n-2) p^2 - \sqrt{10 n p^2 \ln n}, (n-2) p^2 + \sqrt{15 n p^2 \ln n}}.
    \end{equation*}
Thirdly, we have a formula for the lower bound of the Lin--Lu--Yau curvature~\cite[Lemma 5.5]{LLY11} by 
    \begin{align*}
        \kappa_{\LLY}(x, y) \geq 1 - \frac{1}{d_y} \sum_{u \in N_x} \rho(u, \phi(u))+\frac{1}{d_x}-\frac{3\paren{d_y - d_x}}{d_y},
    \end{align*}
where $\phi: N_x \to N_y$ is an injection. 
Then we show that with high probability, there is a large matching between $N_x$ and $N_y$. 
In other words, a lot of the distances $\rho(u, \phi(u))$ is $1$. 
Lastly, we bound the positive and negative terms in the formula by above estimates.

\end{document}